\numberwithin{equation}{section}
\DeclareMathOperator{\vol}{Vol}%
\DeclareMathOperator{\const}{const}%
\newtheorem{theorem}{Theorem}
\newtheorem{Lemma}{Lemma} 
\newtheorem{corollary}{Corollary} 
\newtheorem*{question*}{Question}
\newtheorem{proposition}{Proposition}
\newtheorem*{problem}{Problem}
\theoremstyle{definition}
\theoremstyle{remark}
\newtheorem*{remark*}{Remark}
\newtheorem{example}{Example}
\newcommand{\weg}[1]{}
\newcommand{\be}{\begin{equation}}
\newcommand{\ee}{\end{equation}}
\newcommand \n{\nabla}
\newcommand \ve{\varepsilon}
\newcommand \id{\mathrm{id}}
\newcommand \Rn{\mathbb R^n}
\newcommand \End{\operatorname{End}}
\newcommand \SO{\mathrm{SO}}
\title[Locally conformally Berwald manifolds ]{Locally conformally Berwald manifolds and compact quotients of reducible
manifolds by homotheties} 
\date{\today}
\author{Vladimir S. Matveev}
\address{Institute of Mathematics, Friedrich-Schiller-Universit\"at Jena,  07737 Jena Germany}
\email{vladimir.matveev@uni-jena.de}
\thanks{The first author was partially supported by DFG (GK 1523), DAAD  and FSU Jena}
\author{Yuri Nikolayevsky}
\address{Department of Mathematics and Statistics, La Trobe University, Melbourne, Victoria, 3086, Australia}
\email{y.nikolayevsky@latrobe.edu.au}
\thanks{The second author was partially supported by ARC Discovery grant DP130103485}
\begin{document}
\keywords {Finsler manifold, Berwald manifold, homothety group, reducible holonomy}
\subjclass[2010]{53C60, 53C22, 53B40, 53C29}

\begin{abstract}
We study locally conformally Berwald metrics on closed manifolds which are not globally conformally Berwald. We prove that the characterization of such metrics is equivalent to characterizing incomplete, simply-connected, Riemannian manifolds with reducible holonomy group whose quotient by a group of homotheties is closed. We further prove a de Rham type splitting theorem which states that if such a manifold is analytic, it is isometric to the Riemannian product of a Euclidean space and an incomplete manifold.
\end{abstract}

\maketitle

\section{Introduction} 
\label{s:intro}

A \emph{Finsler metric} on a manifold $M$ of dimension $n\geq 2$ is  a continuous function $F:TM\to [0,\infty)$ that is smooth on the slit tangent bundle $TM^0 = TM \setminus (\text{the zero section})$ and such that for every point $x\in M$ the restriction $F_x := F_{|T_xM}$ is a \emph{Minkowski norm}, that is, $F_x$ is positive homogenous and convex and it vanishes only at $v = 0$:
\begin{enumerate}[(a)]
 \item $F_x(\lambda \cdot v) = \lambda \cdot  F_x (v) $ for any $\lambda \geq 0$.
  \item $F_x (v + u) \le F_{x} (v) + F_{x} (u)$.
  \item $F_x (v)= 0 $ \ $ \Rightarrow$ \ $v=0$.
\end{enumerate}
We do not require that the metric is \emph{reversible}, i.e., that $F_x(v) = F_x(-v)$, and that it is strictly convex, i.e., that the second differential $d^2\left((F_x)^2\right)$ is positive definite.

We will assume all the objects in this paper to be at least as smooth as we need for the proofs, and all the manifolds under consideration to be connected.

\vspace{1ex}
We say that a Finsler metric $F$ is \emph{Berwald}, if there exists a torsion free affine connection $\nabla$ on $M$ whose parallel transport preserves $F$: if $\gamma$ is a smooth path in $M$ with the endpoints $x$ and $y$, and $P_{\gamma}: T_xM \to T_yM$ is the $\nabla$-parallel transport along $\gamma$, then
\begin{equation} \label{par1}
F_y(P_{\gamma}(v)) = F_x(v)
\end{equation}
for all $v \in T_xM.$


A Riemannian metric $g= g_{ij}$ viewed as a Finsler metrics with $F_x(v)= \sqrt{g_{ij}(x)v^iv^j}$ is Berwald, with the associated connection $\nabla$ the Levi-Civita connection. A simple example of a Berwald non-Riemannian metric is  a \emph{Minkowski} metric on $\Rn$  obtained by the following procedure: take a Minkowski norm $F_0$ on $\Rn$ and define the Finsler metric on $\Rn $ by setting $F_x(v)= F_0(v)$, for all $x \in \Rn$. The resulting metric is clearly a Berwald metric whose associated connection is the flat connection on $\Rn$.

We say that a Finsler metric $F$ is \emph{locally conformally Berwald}, if for every point $x \in M$ there exist a neighborhood $U(x)$ and a positive function $\lambda:U(x)\to \mathbb{R}$ such that the conformally related Finsler metric $\lambda F$ is Berwald. We say that a Finsler metric $F$ is \emph{globally conformally Berwald}, if such a function $\lambda$ exists on the whole manifold.

\vspace{1ex}
In this paper we study locally conformally Berwald closed manifolds which are not globally conformally Berwald. The simplest example of such a manifold is given below; more complicated and interesting examples can be constructed from the Riemannian metrics in \cite{MN} using the approach from Section~\ref{s:th1}.

\begin{example} \label{ex1} Let $F$ be an arbitrary Minkowski metric on $\mathbb{R}^n$. Consider the mapping
\begin{equation*}
 \alpha: \mathbb{R}^n \setminus \{ 0\} \to \mathbb{R}^n \setminus \{ 0\}, \ \  x \mapsto q x,
\end{equation*}
where $q\ne 1$ is positive. This mapping generates a free, discrete action of the group $\mathbb{Z}$ on $\mathbb{R}^n \setminus \{0\}$. The quotient space $M= (\mathbb{R}^n \setminus \{0\})/ \mathbb{Z}$ is diffeomorphic to $S^{n-1}\times S^1$. Since the group $\mathbb{Z}$ acts by isometries of the metric $\tfrac{1}{\|x\|} F$, the metric $\tfrac{1}{\|x\|} F$ induces a Finsler metric on $M$. Since the metric $\tfrac{1}{\|x\|} F$ is conformally related to the Berwald (even Minkowski) metric $F$, the induced metric on $M$ is locally conformally Berwald. The Minkowsky metric  $F$ restricted to  $\mathbb{R}^n \setminus \{ 0\}$ is clearly not complete, which, as we explain later, implies that the quotient is not a globally conformally Berwald manifold,  see  Corollary~\ref{cor:7}.
\end{example}

Our first result, Theorem~\ref{thm:reduction} below, reduces the study of locally, but not globally, conformally Berwald closed manifolds to the following purely Riemannian problem.

\begin{problem}
Characterize closed quotients of simply-connected incomplete Riemannian manifolds with reducible holonomy group by a free action of a group of homotheties.
\end{problem}

A diffeomorphism $\phi$ of a Riemannian manifold $(\tilde M, g)$ is called a \emph{homothety}, if the pullback $\phi^*g$ is a constant multiple of $g$; if $\phi^*g = g$, the homothety is an \emph{isometry}. 

\begin{theorem} \label{thm:reduction}
Let $M$ be a smooth manifold with the universal cover $\tilde M$ and let $G=\pi_1(M)$. The following two properties are equivalent.

\begin{enumerate}[{\rm (1)}]
 \item \label{it:red1}
 $M$ admits a locally conformally Berwald Finsler metric which is not globally conformally Berwald.

 \item \label{it:red2}
 $\tilde M$ admits an incomplete Riemannian metric $g$ with reducible holonomy group such that $G$ acts on $\tilde M$ by homotheties.
\end{enumerate}
\end{theorem}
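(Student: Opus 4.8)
The plan is to prove the two implications separately, using in both directions the classical structure theory relating Berwald metrics to affine connections together with Szabó's theorem that the associated connection of a Berwald metric is the Levi-Civita connection of some Riemannian metric.

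For the implication $(2)\Rightarrow(1)$: suppose $g$ is an incomplete metric on $\tilde M$ with reducible holonomy, and $G=\pi_1(M)$ acts by homotheties, say $\phi^*g = c(\phi) g$ for $\phi\in G$. Since the holonomy of $g$ is reducible, $\tilde M$ carries a parallel, non-trivial distribution; I would use this to build a non-Riemannian Berwald metric $\tilde F$ on $\tilde M$ whose associated connection is the Levi-Civita connection of $g$ — concretely, take a $\nabla^g$-parallel decomposition and deform the Riemannian norm in a $G$-invariant, homogeneity-one way (e.g. replace $\sqrt{g}$ by a Minkowski norm built from the two factors that is not an ellipsoid). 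Because the construction depends only on parallel data, the resulting $\tilde F$ is Berwald with connection $\nabla^g$, and its conformal class is preserved by homotheties; scaling $\tilde F$ by a suitable power of the ``distance-to-the-end'' type function, or more safely by using that homotheties act conformally on $\tilde F$ itself, one obtains a $G$-invariant conformal rescaling $\lambda\tilde F$ that descends to $M$. The descended metric $F$ on $M$ is locally conformally Berwald by construction; it fails to be globally conformally Berwald precisely because a global conformal factor making it Berwald would lift to a $G$-equivariant conformal factor on $\tilde M$ turning $\tilde F$ into a complete Berwald metric, contradicting incompleteness of $g$ (this is the mechanism behind Corollary~\ref{cor:7}, which I may invoke).

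For the implication $(1)\Rightarrow(2)$: let $F$ be locally conformally Berwald on $M$, not globally so. Pull back to $\tilde M$; since $\tilde M$ is simply connected, the local conformal factors patch (the obstruction to globalizing lives in $H^1$, or one argues via analytic continuation of the closed ``Berwald 1-form''), so $\tilde F$ on $\tilde M$ is globally conformally Berwald: there is $\tilde\lambda>0$ with $\tilde\lambda\tilde F$ Berwald. By Szabó's theorem the associated torsion-free connection of $\tilde\lambda\tilde F$ is the Levi-Civita connection of a Riemannian metric $g$ on $\tilde M$. The fact that $\tilde\lambda\tilde F$ is non-Riemannian (equivalently, that $F$ is not already Berwald-after-global-conformal-change in a way that would be Riemannian) forces the holonomy of $g$ to preserve the non-trivial ``indicatrix shape'' distribution, hence to be reducible. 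That $G$ acts on $(\tilde M,g)$ by homotheties comes from equivariance: $G$ preserves $\tilde F$ (as a lift of $F$) and permutes local Berwald structures, so it sends $\tilde\lambda$ to a constant multiple of itself, hence acts on the reconstructed $g$ by homotheties. Finally, $g$ is incomplete: completeness of $g$ would make $\tilde\lambda\tilde F$ a complete Berwald metric invariant under a homothety group with compact quotient, and a de Rham / holonomy argument (or directly Corollary~\ref{cor:7}) would then force the conformal factor to be globally trivializable on $M$, contradicting that $F$ is not globally conformally Berwald.

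The main obstacle I anticipate is the globalization step on $\tilde M$ in direction $(1)\Rightarrow(2)$ together with the precise bookkeeping of the conformal factor: one must show that the locally defined Berwald conformal factors differ on overlaps by constants (so their logarithmic differentials glue to a global closed $1$-form on $M$), that this $1$-form is exact on $\tilde M$ but represents a nonzero class on $M$ exactly when $F$ is not globally conformally Berwald, and that its non-triviality is equivalent to the homothety action of $G$ on $g$ being non-isometric and to $g$ being incomplete. Handling reducibility versus possible symmetric-space subtleties, and ruling out the reversible/Riemannian degenerate case so that the holonomy is genuinely reducible rather than irreducible, is the delicate point; everything else is the standard Berwald–Szabó dictionary applied equivariantly.
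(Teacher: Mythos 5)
Your overall architecture matches the paper's: in direction (2)~$\Rightarrow$~(1) you build a non-Riemannian Berwald metric from the parallel splitting and rescale it so that it descends to $M$; in direction (1)~$\Rightarrow$~(2) you globalize the Berwald representative over the simply connected cover, pass to an associated Riemannian metric, and read off the homothety action, incompleteness, and reducibility. But the two steps you defer to the ``main obstacle'' paragraph are precisely the mathematical content, and one of them is argued incorrectly. Everything in (1)~$\Rightarrow$~(2) --- the patching of local conformal factors on $\tilde M$, the claim that $G$ sends $\tilde\lambda$ to a constant multiple of itself (so that $G$ acts on $g$ by homotheties), and the completeness argument --- rests on the statement that two conformally related Berwald metrics, at least one of which is non-Riemannian, differ by a \emph{constant} factor (Vincze's theorem, Theorem~\ref{Th2} in the paper, which is proved there via the Binet--Legendre metric and yields the canonical connection of Corollary~\ref{cor:existcan}). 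You name this as the obstacle but neither prove it nor cite it; without it the closed $1$-form of logarithmic differentials you want to glue need not exist, and the whole direction collapses. Moreover, your reducibility argument is wrong as stated: the holonomy of $g$ preserving a non-ellipsoidal indicatrix only shows it is not transitive on the unit sphere; by Berger--Simons this leaves two cases, reducible holonomy or a locally symmetric metric of rank at least two, and the latter is not excluded by any ``indicatrix shape distribution''. The paper's Theorem~\ref{th3} eliminates it by a separate argument: in the symmetric case $\|R\|^2$ is a positive constant, a homothety with coefficient $k$ rescales it by $k^{-2}$, so $G$ must act by isometries and the Berwald metric would descend to $M$, contradicting that $F$ is not globally conformally Berwald. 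You flag ``symmetric-space subtleties'' but supply no such argument.

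In direction (2)~$\Rightarrow$~(1) there are two further points to settle. The differential of an element of $G$ may permute the holonomy-irreducible summands $V_1,\dots,V_m$, so ``$G$-invariance'' of your Minkowski norm is not automatic: as in Example~\ref{Ex3} one must choose $N$ symmetric under interchanging coordinates attached to summands of equal dimension, and only then does $G$ act by homotheties of the Berwald metric, allowing an equivariant rescaling that descends. And the final step, that the descended metric is not \emph{globally} conformally Berwald, again needs the uniqueness theorem: a global Berwald representative on the closed $M$ has complete associated connection (it is the Levi--Civita connection of its Binet--Legendre metric on a closed manifold), while by uniqueness every Berwald metric in the conformal class on $\tilde M$ has the incomplete Levi--Civita connection of $g$; this is Theorem~\ref{th2} of the paper, whereas Corollary~\ref{cor:7}, which you propose to invoke, only covers Example~\ref{ex1}.
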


It is relatively easy to see and will be explained in the proof of Theorem~\ref{thm:reduction} that if $G$ is a discrete, cocompact group of homotheties freely acting on a \emph{complete} manifold, then $G$ consists only of isometries. So the assumption of incompleteness in Theorem~\ref{thm:reduction}\eqref{it:red2} can be replaced by requiring that the group $G$ contains not only isometries, which is sometimes termed by saying that the homothety group $G$ is \emph{essential}.

The proof of Theorem~\ref{thm:reduction} which we give in Section~\ref{s:th1} is constructive in the both directions. Note that our construction of a locally reducible, incomplete metric $g$ on $\tilde M$ from a locally, but not globally, conformally Berwald metric $F$ produces a unique metric, up to a constant multiple. Other constructions are possible, but the resulting metrics on $\tilde M$ are somehow ``related" and depend on a finite number of constants. Meanwhile, the construction in the opposite direction depends on a more or less arbitrary choice of a norm on a finite-dimensional space (see Example~\ref{Ex3}); so, although the resulting locally conformally Berwald metrics are related -- for example they have the same canonical connection (Corollary~\ref{cor:existcan}) -- but still there is an infinite-dimensional freedom in choosing such a metric.

\vspace{1ex}
The above Problem was studied in the literature, see e.g. the recent paper \cite{BM} and references therein. Actually, it was believed (see \cite[Conjecture~1.3]{BM}) that if a quotient of an incomplete, simply-connected, Riemannian manifold $(\tilde M, g)$ by a free, discrete action of a group $G$ of homotheties is compact, then $g$ is flat. Would this conjecture be true, a complete description of locally, but not globally conformally Berwald manifolds, would follow from \cite{Fried}, see \cite[\S 5]{Troyanov} for details. Unfortunately, a counterexample to this conjecture was constructed in \cite{MN}. Our second result states that certain phenomena observed in that counterexample are present in every (real-analytic) metric of this class -- we prove the following
de Rham type splitting theorem.

\begin{theorem} \label{thm:main} Let $(\tilde M, g)$ be an incomplete real-analytic simply-connected Riemannian manifold whose holonomy group is reducible. Let $G$ be a freely, discretely acting group of  homotheties such that the quotient $M=\tilde M/G$ is compact. Assume the metric $g$ is not flat. Then $(\tilde M, g)$ is isometric to the direct product $(\mathbb{R}^k, g_{\mathrm{standard}})\times (N,h)$, for some $k \ge 1$ and some Riemannian manifold $(N, h)$ \emph{(}which is automatically neither complete nor flat\emph{)}. 
\end{theorem}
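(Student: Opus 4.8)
The plan is to reduce the statement to the (known) de Rham decomposition of $(\tilde M, g)$, and then to analyze how the essential homothety group $G$ interacts with the factors. First I would invoke the local de Rham decomposition theorem: since $g$ is real-analytic with reducible holonomy, $\tilde M$ splits, at least locally and hence — because $\tilde M$ is simply connected — globally, as a Riemannian product $\tilde M = \tilde M_0 \times \tilde M_1 \times \cdots \times \tilde M_r$, where $\tilde M_0 = \mathbb R^m$ is the flat factor (possibly $m=0$) and $\tilde M_1, \ldots, \tilde M_r$ are the non-flat, holonomy-irreducible factors. (Completeness is not needed for the \emph{local} splitting, and simple connectivity lets one patch it; this is the analytic version of de Rham's theorem, valid for incomplete manifolds.) The content of the theorem is then: (i) at least one non-flat factor is present (true by hypothesis, since $g$ is not flat), and (ii) the flat factor $\mathbb R^k$ can be taken to have $k \ge 1$ and split off as a \emph{genuine} $\mathbb R^k$; what remains to show is precisely that $m \ge 1$ after possibly absorbing a line from somewhere — i.e., that an essential homothety forces a Euclidean line.

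The key step is to use an essential homothety $\phi \in G$, i.e., one with $\phi^* g = c^2 g$ for some $c \ne 1$. By uniqueness of the de Rham decomposition (up to permutation of isometric factors), $\phi$ must permute the factors $\tilde M_i$, up to isometry, after rescaling. Since $\phi$ has infinite order modulo isometries (its iterates have scaling factors $c^{2j}$, all distinct), and there are finitely many factors, some power $\phi^N$ preserves each factor \emph{as a set}; replacing $\phi$ by $\phi^N$ I may assume $\phi = (\phi_0, \phi_1, \ldots, \phi_r)$ with $\phi_i : \tilde M_i \to \tilde M_i$ and $\phi_i^* g_i = c^2 g_i$ (the scaling constant must be the same $c^2$ on every factor, since the metric on the product is the orthogonal sum). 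Now I would argue that a non-flat, holonomy-irreducible Riemannian manifold admits \emph{no} essential self-homothety: a homothety with factor $c \ne 1$ would, by iteration, either expand or contract distances without bound, and on a holonomy-irreducible manifold one can derive a contradiction — e.g., via the behavior of the curvature tensor, which scales as $\phi_i^* R = R$ forces $R \equiv 0$ on the relevant factor when combined with the scaling of the metric (the norm of curvature scales by $c^{-2}$ under $\phi_i$, so iterating $\phi_i$ drives $|R|$ to $0$ or $\infty$ at corresponding points, contradicting smoothness/analyticity unless $R=0$ there). Hence $\phi_i$ must act purely by isometries on each non-flat factor, which is impossible for a strictly essential $c \ne 1$ unless there are no non-flat factors contributing — so all the scaling must be "carried" by the flat part. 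This shows the flat factor $\tilde M_0 = \mathbb R^m$ is nontrivial, $m \ge 1$, and we set $k = m$ and $(N,h) = \tilde M_1 \times \cdots \times \tilde M_r$, which is non-flat (as $g$ is not flat) and incomplete (a complete $N$ together with complete $\mathbb R^k$ would make $\tilde M$ complete, so that an essential homothety would be impossible by the remark following Theorem \ref{thm:reduction} — contradiction).

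The main obstacle I anticipate is making the "no essential self-homothety on an irreducible non-flat factor" step fully rigorous in the \emph{incomplete} setting: the naive iteration argument wants to push a point to a limit where curvature blows up, but in an incomplete manifold the orbit $\{\phi_i^j(p)\}$ may run off the manifold, so one cannot directly extract a contradiction from boundedness of curvature on a compact set. The fix should come from the compactness of the quotient $M = \tilde M/G$: the curvature function descends to a tensor on the compact base (the scaling makes $|R|^2 \, \mathrm{vol}$ or an appropriate weighted version $G$-invariant), so $|R|$ is controlled from above and below on $\tilde M$ in terms of a "wara" factor determined by $G$; combining this with the product structure and the fact that $\phi$ moves the $\mathbb R^m$-coordinate by a genuine dilation pins down that the full scaling is accounted for by $\mathbb R^m$ and that $R$ is parallel-transported consistently, forcing $m \ge 1$. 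A secondary technical point is handling the permutation of isometric factors and ensuring that passing to a finite-index subgroup of $G$ (replacing $\phi$ by $\phi^N$ and intersecting over a generating set) does not destroy cocompactness of the quotient — but a finite-index subgroup of a cocompact group is cocompact, so this is routine.
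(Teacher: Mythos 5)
Your first step already fails, and it fails at exactly the point the theorem is about. There is no ``analytic de Rham theorem valid for incomplete manifolds'': reducible holonomy on a simply connected manifold gives only the two orthogonal, totally geodesic foliations and a \emph{local} product structure; the global isometric splitting $\tilde M = \mathbb{R}^m \times \tilde M_1 \times \dots \times \tilde M_r$ genuinely requires completeness. (An open disc in $\mathbb{R}^2$ is simply connected, real-analytic, has reducible holonomy, but is not a Riemannian product of two intervals.) Obtaining a global splitting in the incomplete setting is precisely the content of Theorem~\ref{thm:main}, so assuming the decomposition at the outset begs the question. The paper never has de Rham factors at its disposal: it works only with the foliations $\mathcal{F}_1,\mathcal{F}_2$, introduces the $G$-invariant Fried metric $d_\infty^{-2}g$ (complete on the compact quotient), analyzes minimal geodesics to the metric boundary, proves that at every point one of the leafwise curvatures $R_1,R_2$ vanishes (Proposition~\ref{p:Ri=0}) and that the leaves of one foliation are complete and flat with $d_\infty$ constant on them (Proposition~\ref{l:complete}), and only then gets the product structure from analyticity together with the Ponge--Reckziegel theorem \cite{PR}.

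Your key lemma is also false: a non-flat, holonomy-irreducible \emph{incomplete} manifold can very well admit essential self-homotheties --- metric cones do, and, more to the point, the factor $(N,h)$ in the conclusion of the theorem (for instance in the counterexample of \cite{MN} to the Belgun--Moroianu conjecture) is itself acted on essentially by $G$, since a homothety of a Riemannian product with ratio $c\neq 1$ scales \emph{every} factor by $c$. The iteration argument breaks exactly where you suspect: the orbit escapes to the metric boundary, where $|R|$ may blow up without any contradiction (as on a cone near its apex), and compactness of $M=\tilde M/G$ does not repair it, because $G$ does not act cocompactly on an individual factor and the known examples show the statement you want is simply wrong. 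Moreover, the conclusion you draw is internally inconsistent: having correctly observed that the scaling constant is the same on all factors, ``all the scaling is carried by the flat part'' is impossible; if non-flat irreducible factors admitted no essential homothety, your argument would force $G$ to consist of isometries (hence $\tilde M$ complete, contradicting incompleteness) or force all factors to be flat, contradicting the hypothesis --- it would not yield $k\ge 1$, but rather that the situation of the theorem cannot occur, which \cite{MN} shows is false.
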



\section{Proof of Theorem \ref{thm:reduction}}
\label{s:th1}

\subsection{Plan of proof}
\label{subsecplan}
In Section~\ref{subsec1} we recall the definition and the main properties of the Binet-Legendre metric and the description of Berwald metrics using the Binet-Legendre metric. That description will immediately produce the main construction in the proof of implication \eqref{it:red2} $\Longrightarrow$ \eqref{it:red1} of Theorem~\ref{thm:reduction}, though to prove that the resulting metric is indeed locally, but not globally Berwald, we will need results of the following sections.

In Section~\ref{subsec2} we prove, for conformally Berwald non-Riemannian Finsler metrics, the existence of a unique affine torsion-free connection, the \emph{canonical} connection, whose parallel transport preserves any Berwald metric in the conformal class. This will follow from the results of Cs. Vincze \cite{vincze1, vincze2}; we will give a new, shorter proof using the Binet-Legendre metric.

In Section~\ref{subsec3} we show that if our manifold is closed, the canonical connection has reducible holonomy group. We also show that the connection is complete if and only if metric is globally conformally Berwald. This will give us all the ingredients for the proof of Theorem~\ref{thm:reduction} in Section~\ref{ss:pfth1}.

\subsection{ Binet-Legendre Metric and its properties for Berwald non-Riemannian metrics} \label{subsec1}
One of Riemannian metrics which can be constructed from a given Finsler metric $F$ is the \emph{Binet-Legendre metric} $g_F$. The construction goes as follows. For every $x \in M$, consider the convex set $K_x:= \{ v \in T_xM \mid F_x(v)\le 1\}$ and choose an arbitrary (linear) volume form $\Omega$ on $T_xM$. Introduce an inner product $g_x^*$ on $T^*_xM$ by setting
\begin{equation*}
  g_x^*(\xi, \eta) := {\frac{n+2}{\vol_\Omega(K_x)} \int_{K_x} \xi(v) \eta(v) d\Omega}
\end{equation*}
for two linear forms $\xi, \eta \in T^*_xM$.

It is not difficult to show (or see \cite{Troyanov} for details) that
\begin{itemize}
\item $g_x^*$ is bilinear, symmetric and positive definite;
\item $g_x^*$ does not depend on the choice of the (linear) volume form $\Omega_x$;
\item $g_x^*$ smoothly depends on $x$, if $F$ is smooth (or even \emph{partially smooth}, see \cite{Troyanov});
\item $g_x^*$ behaves as a $(2,0)$-tensor under coordinate changes.
\end{itemize}
The \emph{Binet-Legendre} metric $g_F$ associated to the Finsler metric $F$ is the Riemannian metric dual to $g^*$. The construction first appeared in \cite{Centore}; it was then rediscovered in \cite{Troyanov} and has been used there to solve several named problems.

The Binet-Legendre metric is a useful tool in the study of Berwald Finsler metrics; for example, with its help one can give shorter proofs of some of the classical results of Szab\'o \cite{Sz1} and Vincze \cite{vincze0}. We start by observing that for a Berwald metric $F$, its associated connection $\nabla$ is the Levi-Civita connection of the Binet-Legendre metric $g_F$. Indeed, let $\gamma$ be a smooth path with the endpoint $x , y \in M$ and let $P_{\gamma}: T_xM \to T_yM$ be the $\nabla$-parallel transport along $\gamma$. Then \eqref{par1} implies $P_{\gamma}(K_x)= K_y$. Since the construction of the Binet-Legendre metric only requires the linear structure and the set $K$, the linear map $P_\gamma$ sends the Binet-Legendre metric at $x$ to the Binet-Legendre metric at $y$. So $g_F$ is parallel relative to $\nabla$ implying that $\nabla$ is the Levi-Civita connection of $g_F$.

Consider now the \emph{restricted holonomy group} $\mathrm{Hol}^0_x(\nabla)$, the subgroup of $\End(T_xM)$ generated by parallel transports $P_\gamma$ along contractible loops $\gamma$ starting (and ending) at $x$. It is a subgroup of the (full) \emph{holonomy group} $\mathrm{Hol}_x(\nabla)$ which is generated by parallel transports $P_\gamma$ along all (not necessarily contractible) loops $\gamma$ starting and ending at $x$. Clearly, if $\n$ is the associated connection of a Berwald metric $F$, then $\mathrm{Hol}^0_x(\nabla)$ preserves $F_x$ and therefore $g_F(x)$, and hence $\mathrm{Hol}^0_x(\nabla)\subseteq \SO(T_xM, g_F(x))$.

Suppose that $\mathrm{Hol}^0_x(\nabla)$ acts transitively on the unit $g_F$-sphere in $T_xM$. Then the ratio $\tfrac{\left(F_x(v)\right)^2}{g_F(v,v)}$ does not depend on the point $v$ on the sphere. Since it is homogeneous of order zero, it is constant on the whole slit tangent space $T_xM\setminus \{0\}$. Then the restriction of $F$ to $T_xM$ is a constant multiple of $\sqrt{g(v,v)}$, which implies that $F$ is a Riemannian metric.

If the restricted holonomy group does not act transitively on the unit sphere, then by the classical result of Berger \cite{berger} and Simons \cite{simons} either $\mathrm{Hol}^0_x(\nabla)$ is reducible (which implies that locally $g_F$ is a direct product), or $g_F$ is the metric of a locally symmetric space of rank greater than $1$.

Note also that if the restricted holonomy group of a non-flat Riemannian metric is reducible, then the full holonomy group is not transitive on the unit sphere. Indeed, in this case there exists a orthogonal decomposition $T_xM = V_0 \oplus V_1 \oplus \ldots \oplus V_m$ of the tangent space in the direct sum of invariant subspaces (we assume that the action on $V_0$ is trivial and the decomposition is maximal). The action of the holonomy group may permute the invariant subspaces (with some restrictions; for example, the permuted subspaces must have the same dimension and the ``flat'' subspace $V_0$ remains stable), but can not change the decomposition. 

Now, if the holonomy group of the Levi-Civita connection $\nabla$ of a Riemannian metric $g$ is not transitive on the unit sphere in $T_xM$, then there exists a Berwald non-Riemannian Finsler metric whose associated connection is $\nabla$. Indeed, take a $\mathrm{Hol}_x$-invariant norm $F_x$ on $T_xM$ and extend it to the tangent space $T_yM$ at an arbitrary point $y \in M$ by a parallel transport $P_\gamma$ along a curve $\gamma$ connecting $x$ and $y$. Since the norm is invariant with the respect to the holonomy group, the construction does not depend on the choice of $\gamma$. Moreover, the resulting metric is smooth and is preserved by $\nabla$-parallel transport, that is, is a Berwald metric. Of course, for this construction to work, one needs to explain why there exists a $\mathrm{Hol}_x$-invariant norm. We will not need such explanation in the case of locally symmetric metrics $g$ (which was the hard part of the prove of \cite[Theorem~1]{Sz1}); let us only mention that many such norms exist and that their construction can be easily done using Chevalley's polynomials which are preserved by the holonomy group, see e.g. \cite{Planche1, Planche2}. We will however need the construction of the $\mathrm{Hol}_x$-invariant norm in the case when the restricted holonomy group is reducible (similar to the construction in \cite{Sz1}); we start with a simple local example, whose easy generalization will be used in the proof of Theorem \ref{thm:reduction}.

\begin{example} \label{ex2} Consider the direct product $(M_1 \times M_2, g_1 + g_2)$ of two Riemannian manifolds $(M_1, g_1)$ and $(M_2, g_2)$. The tangent space at a point $(x_1, x_2)\in M_1 \times M_2$ naturally splits into the direct sum of $T_{x_1}M_1$ and $T_{x_2}M_2$.

Given a reversible Minkowski norm $N$ on $\mathbb{R}^2$, define the Finsler metric $F$ by
\begin{equation*}
F_{(x_1,x_2)}(v_1 + v_2) = N(\|v_1\|, \|v_2\|),
\end{equation*}
where $\|v_i\|$ is the $g_i$-norm of a vector $v_i$ tangent to $M_i$. Since the parallel transport in the Levi-Civita connection of $g$ preserves the metrics $g_1$ and $g_2$ and the property of a vector to be tangent to $M_i$, it also preserves the Finsler metric $F$; hence $F$ is Berwald.
\end{example}

Example~\ref{ex2} can be easily generalized to the direct product of $k$ Riemannian manifolds. If $k=1$, we obtain Riemannian metrics (which are, of course, Berwald), and if $k=n$, we obtain, at least locally, all the (reversible) Minkowski metrics. One can also slightly modify Example~\ref{ex2} to obtain, locally, all the Minkowski metrics. Indeed, if a metric $g_i$ on $M_i$ is flat, then we do not need reversibility of $N$ with respect to the $i$-th coordinate.

The construction in the following example essentially proves the implication \eqref{it:red2} $\Longrightarrow$ \eqref{it:red1} of Theorem~\ref{thm:reduction}.

\begin{example} \label{Ex3}
Suppose $(\tilde M, g)$ is an incomplete, simply-connected Riemannian manifold whose holonomy group is reducible. For any $x \in \tilde M$ we have a decomposition
\begin{equation} \label{decomposition}
T_x \tilde M= V_0\oplus V_1\oplus \ldots \oplus V_m,
\end{equation}
where the subspaces $V_i$ are $\mathrm{Hol}_x$-invariant, the action of $\mathrm{Hol}_x$ on $V_0$ is trivial ($V_0$ may have dimension $0$) and the action on each of the components $V_1, \ldots, V_m$ is irreducible.

Choose a reversible, non-Euclidean norm $N$ on $\mathbb{R}^{m+1}$, and use it to construct a norm $F_x$ on $T_x \tilde M$ by setting
\begin{equation*}
 F_x(v_0 + v_1+ \ldots+ v_m)= N(\|v_0\|,\ldots, \|v_m\|),
\end{equation*}
were $v_i\in V_i$ and $\|v_i\|$ is the $g$-norm. The norm $F_x$ is $\mathrm{Hol}_x$-invariant  and therefore induces a Berwald Finsler metric $F$ on $\tilde M$, whose associated connection is the Levi-Civita connection of $g$.

Now suppose a group $G$ acts freely and discretely on $\tilde M$ by homotheties of $g$ and let $M=\tilde{M}/G$. The differential of an element $\phi \in G$ respects the splitting \eqref{decomposition} in the following sense: it sends $V_0(x)$ to $V_0(\phi(x))$, and sends every $V_i(x), \; i > 0$, to some $V_j(\phi(x)), \; j > 0$, of the same dimension. We now impose the following additional assumption on $N$: for any $i, j > 0$ such that $\dim V_i = \dim V_j$, we require $N$ to be invariant with respect to the interchanging of the $(i+1)$-st and $(j+1)$-st coordinates: $N(\dots,\|v_i\|,\dots,\|v_j\|,\dots)= N(\dots,\|v_j\|,\dots,\|v_i\|,\dots)$. This guarantees that any $\phi \in G$ is also a homothety of $F$. Then we can find a smooth positive function $f$ on $\tilde M$ such that $G$ acts by isometries of the Finsler metric $F'=fF$. The projection of $F'$ to $M$ is then a locally conformally Berwald metric. Moreover, since the associated connection of $F$ is not complete, that metric is not globally conformally Berwald; this will follow from Theorem~\ref{th2} below.
\end{example}

\subsection{Canonical connection for a conformally Berwald non-Riemannian metric}
\label{subsec2}

We start with giving a new proof of the following well-known result; our proof is more Riemannian and requires no Finsler technique.

\begin{theorem}[Cs. Vincze, \cite{vincze1,vincze2}] \label{Th2}
Let $F$ be a Berwald metric on a connected manifold of dimension $n\ge 2$. Assume that a conformally related metric $\tilde F=e^{\varphi} F$ is also Berwald. Then either $F$ is a Riemannian metric, or $\varphi$ is a constant.
\end{theorem}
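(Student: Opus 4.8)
The plan is to use the Binet-Legendre construction to convert the problem into a statement about Riemannian metrics and affine connections. Let $g = g_F$ be the Binet-Legendre metric of $F$ and $\tilde g = g_{\tilde F}$ the Binet-Legendre metric of $\tilde F = e^{\varphi}F$. Since the Binet-Legendre construction is conformally covariant in the obvious sense -- scaling the norm $F_x$ by a factor $e^{\varphi(x)}$ rescales the convex body $K_x$ homothetically and hence rescales $g^*_x$, so $g^*$ is multiplied by $e^{-2\varphi}$ and $g$ by $e^{2\varphi}$ -- we get $\tilde g = e^{2\varphi} g$. Now recall from Section~\ref{subsec1} that the associated connection of a Berwald metric is the Levi-Civita connection of its Binet-Legendre metric. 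So the hypothesis says that both $\nabla = \nabla^g$ and $\tilde\nabla = \nabla^{\tilde g} = \nabla^{e^{2\varphi} g}$ preserve their respective Finsler metrics; but more importantly, the associated connection of $F$, namely $\nabla^g$, preserves $F$, and the associated connection of $\tilde F$, namely $\nabla^{\tilde g}$, preserves $\tilde F = e^\varphi F$, hence also preserves $F$ (since $e^\varphi$ is a function on $M$ and parallel transport is along paths, the conformal factor does not affect the unit balls up to scaling... wait, this needs the holonomy argument). Let me restructure.

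First I would show that the restricted holonomy groups of $\nabla^g$ and $\nabla^{\tilde g}$ at a point $x$ coincide, or at least that $\mathrm{Hol}^0_x(\nabla^{\tilde g})$ preserves $F_x$. The key point: $\nabla^{\tilde g}$-parallel transport along a loop $\gamma$ at $x$ preserves $\tilde F_x = e^{\varphi(x)} F_x$, hence preserves $F_x$ (the conformal factor at the basepoint is a single positive constant). So $\mathrm{Hol}_x(\nabla^{\tilde g})$ preserves $F_x$, and likewise $\mathrm{Hol}_x(\nabla^g)$ preserves $F_x$. Next, assume $F$ is not Riemannian. Then, as argued in the excerpt, neither holonomy group can act transitively on the unit sphere of its Binet-Legendre metric (if it did, $F_x$ would be a constant multiple of that metric's norm, making $F$ Riemannian). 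Therefore by Berger-Simons, for each of $g$ and $\tilde g$ the restricted holonomy is either reducible or that of a locally symmetric space of rank $\ge 2$.

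Now I invoke the classical fact that two conformally related metrics $g$ and $e^{2\varphi}g$ with $\varphi$ non-constant cannot both have "special" holonomy of this kind unless they are flat: a classical result (essentially going back to Brinkmann, and in the reducible/symmetric case a well-known rigidity statement) says that if $g$ and $e^{2\varphi}g$ are both locally reducible or locally symmetric of higher rank, then $\varphi$ is constant, because a genuine conformal change that preserves reducibility forces the gradient of $\varphi$ to be a concurrent/parallel-like vector field incompatible with a de Rham splitting into $\ge 2$ irreducible factors, and a conformal change preserving the locally symmetric property of rank $\ge 2$ is likewise rigid. Concretely, the Schouten/Weyl transformation laws show that the Weyl tensor is a conformal invariant while the Ricci-type part transforms by Hessian of $\varphi$; matching the required algebraic form of the curvature for a (reducible or symmetric) space before and after forces $\nabla^2\varphi + d\varphi\otimes d\varphi - \tfrac12|d\varphi|^2 g$ to be a multiple of $g$, and then one checks this multiple must vanish, so $\varphi$ is constant unless the space is flat; but flat is Riemannian, already excluded in the non-Riemannian case since a flat Binet-Legendre metric with reducible-enough holonomy... actually flatness of $g$ does not make $F$ Riemannian, so I must handle it: if $g$ is flat, reducibility is automatic and I argue directly that $e^{2\varphi}g$ flat forces $\varphi$ constant by the same Hessian equation with zero curvature.

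The main obstacle I expect is making the holonomy comparison clean and, above all, pinning down exactly which classical conformal-rigidity statement to cite and in what generality -- the reducible case and the higher-rank symmetric case need slightly different arguments, and one must be careful that the conclusion "$\varphi$ constant" is what comes out rather than "$\varphi$ constant or $g$ flat," so the flat subcase requires a short separate (but elementary) computation showing $e^{2\varphi}\,g_{\mathrm{flat}}$ flat $\Rightarrow \varphi$ constant, which is false in general(!) -- so in fact the correct statement must be that when $g$ is flat, $F$ being non-Riemannian is impossible to combine with the hypotheses, or the theorem's hypotheses force $\varphi$ constant through a more careful analysis of the concurrent vector field $\nabla\varphi$ against the invariant decomposition of $T_x\tilde M$. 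Resolving this flat/non-reducible borderline carefully is the crux; everything else is bookkeeping with the Binet-Legendre metric and Berger-Simons.
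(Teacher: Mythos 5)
Your route (Binet--Legendre $\Rightarrow$ holonomy preserves $F_x$ $\Rightarrow$ Berger--Simons $\Rightarrow$ conformal rigidity of reducible/higher-rank-symmetric metrics) has a genuine gap exactly where you flag it, and that gap is not a removable borderline but the heart of the matter. The rigidity statement you want to cite -- ``if $g$ and $e^{2\varphi}g$ are both reducible or locally symmetric of rank $\ge 2$, then $\varphi$ is constant'' -- is false when $g$ is flat (inversion-type factors such as $\varphi=-2\log|x-x_0|$ keep a flat metric flat, hence reducible, with $\varphi$ non-constant), and the flat case is precisely the basic situation of the theorem: a non-Euclidean Minkowski metric on $\Rn$ is a non-Riemannian Berwald metric whose associated connection is flat, so its (restricted) holonomy is trivial and your holonomy-based constraints carry no information at all there. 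The underlying problem is that you only use the hypothesis through holonomy at a point, i.e.\ preservation of $F_x$ along loops, and discard the much stronger fact that the parallel transports of \emph{both} connections along \emph{arbitrary} paths carry $F$ into its own conformal class; without that extra input the conclusion cannot be recovered in the flat case, and your closing suggestion about analysing the ``concurrent vector field $\nabla\varphi$'' is exactly the missing argument, not a proof. Even in the non-flat reducible case the conformal-rigidity statement is not a standard citable theorem in the generality you need and would itself have to be proved.

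For comparison, the paper's proof never invokes Berger--Simons at this point. It compares the two parallel transports along one short path: the composition $\Phi_\ve$ of $\nabla^{g}$-transport forward and $\nabla^{\tilde g}$-transport back preserves the conformal class of $F_x$, so its infinitesimal generator, namely the difference tensor \eqref{prev} contracted with the velocity $v$, lies in the Lie algebra of the group of linear maps preserving $F_x$ up to scale. Its skew-symmetric part $v^k\varphi_{,j}-v_j\varphi^{,k}$ therefore generates rotations preserving $F_x$, so if $F$ is not Riemannian and $d\varphi\ne 0$, then $F_x$ is $\SO(n-1)$-symmetric about $\operatorname{grad}\varphi$, the direction of $\operatorname{grad}\varphi$ is canonically determined by $F$ and hence parallel, and applying the totally-geodesic property of $D=\ker d\varphi$ for both $g$ and $\tilde g$ together with \eqref{prev} forces $\operatorname{grad}\varphi=0$. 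This pathwise, infinitesimal use of the Berwald hypothesis is what is absent from your proposal and what handles the flat case (and all others) uniformly; if you want to salvage your approach, you would have to add an argument of this kind rather than a holonomy classification.
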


\begin{proof}
The Binet-Legendre metrics $g:= g_{F}$ and $\tilde g:= g_{\tilde F}$ are related by $\tilde g= e^{2\varphi} g$. The conformal class of the both metrics $F$ and $\tilde F$ is preserved by parallel transports for Levi-Civita connections of both $g$ and $\tilde g$. Take a point $x$, a tangent vector $v\in T_xM$ and a smooth curve $\gamma(t)$, where $t \in [0,\varepsilon]$ is an arclength parameter, with $\gamma(0)=x, \, \dot\gamma(0)=v$. Consider the composition of the parallel transport along $\gamma$ from $x$ to $\gamma(\ve)$ using the Levi-Civita connection of $g$ with the parallel transport along the same curve $\gamma$, but in the opposite direction, from $\gamma(\ve)$ to $x$, and using the Levi-Civita connection of $\tilde g$. The resulting endomorphism $\Phi_\ve$ of $T_xM$ preserves the conformal class of $F_x$, hence multiplies $F_x$ by a constant. On the other hand, for small $\varepsilon$ we have $\Phi_\ve = \id + \ve L + o(\ve)$, where the transformation $L$ of $T_xM$ is generated by the difference of connections, which for conformally related metrics is given by 
\begin{equation} \label{prev}
  \tilde \Gamma^k{}_{ij} -\Gamma^k{}_{ij}= \delta^k_i\partial_j\varphi + \delta^k_j\partial_i\varphi-g_{ij}\nabla^k\varphi,
\end{equation}
and hence is represented by the matrix
\begin{equation*}
L^k_i=v^j(\tilde \Gamma^k{}_{ij} -\Gamma^k{}_{ij}) = v^j (\delta^k_i \varphi_{,j} + \delta^k_j \varphi_{,i}-g_{ij}\varphi_{, }^{\ k})=
  d\varphi(v) \delta^k_i + v^k \varphi_{,i} - v_i \varphi_{,}^{\ k},
\end{equation*}
which must lie in the Lie algebra of the Lie group of linear transformations preserving the conformal class of $F_x$. Thus, for any $v\in T_xM$ the Finsler metric $F_x$ is invariant with respect to the rotation generated by the skew-symmetric matrix $v^k \varphi_{,j} - v_j \varphi_{,}^{\ k}$. It follows that $F_x$ is rotationally-symmetric with respect to the group $\SO(n-1)$ of rotations around $\varphi_{,}^{\ k}= \operatorname{grad}(\varphi)$.

We see that if $F$ is not Riemannian and $d\varphi\ne 0$, then the direction of the vector field $\operatorname{grad}(\varphi)$ is uniquely determined by $F$.
Hence, the $g$-unit vector field in that direction is parallel relative to the Levi-Civita connection of $g$. Then the distribution $D = \ker d\varphi$ orthogonal to $\operatorname{grad}(\varphi)$, is integrable and totally geodesic, hence for any vector field $v\in D$ we have $\nabla_v v \in D$. Swapping the metrics $g$ and $ \tilde g$, we obtain, by a similar argument, that $\tilde \nabla_v v \in D$. So for any vector field $v\in D$ we obtain
\begin{equation*}
\tilde \nabla_v v - \nabla_v v \in D,
\end{equation*}
which implies in view of  \eqref{prev} that $\operatorname{grad}(\varphi)=0$. Finally, the metric $F$ is Riemannian, or $\varphi= \const$.
\end{proof}

\begin{corollary} \label{cor:existcan}
Let $(M, F)$ be a locally conformally Berwald non-Riemannian manifold. Then there exists an unique affine, torsion-free connection whose parallel transport preserves any Berwald metric in the conformal class.
\end{corollary}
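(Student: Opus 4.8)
The plan is to build the connection locally using Theorem~\ref{Th2} and then patch the local pieces together by a uniqueness argument. First I would fix a point $x\in M$ and a neighborhood $U(x)$ on which, by the definition of locally conformally Berwald, there is a positive function $\lambda$ with $\lambda F$ Berwald; let $\nabla^{U}$ be the associated (torsion-free, affine) connection of $\lambda F$ on $U(x)$. This gives a candidate connection near every point. The key claim is that $\nabla^{U}$ does not depend on the choice of $\lambda$: if $\mu$ is another positive function on $U(x)$ with $\mu F$ Berwald, then $\mu F = (\mu/\lambda)(\lambda F)$ exhibits two conformally related Berwald metrics on the connected open set $U(x)$, and since $F$ (hence $\lambda F$) is non-Riemannian, Theorem~\ref{Th2} forces $\mu/\lambda$ to be constant. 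But conformally related Finsler metrics that differ by a constant factor have the same associated connection (the defining condition \eqref{par1} is insensitive to an overall positive scalar), so $\mu F$ and $\lambda F$ have the same associated connection. Thus $\nabla^{U}$ depends only on $U(x)$, not on $\lambda$.

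Next I would check that these locally defined connections agree on overlaps: if $U$ and $U'$ are two such neighborhoods with $U\cap U'\neq\emptyset$, pick $p\in U\cap U'$ and a connected neighborhood $W\subseteq U\cap U'$ of $p$; the conformal factors $\lambda$ (on $U$) and $\lambda'$ (on $U'$) both work on $W$, so by the previous paragraph the restrictions $\nabla^{U}|_W$ and $\nabla^{U'}|_W$ both equal the unique associated connection of a Berwald metric conformal to $F$ on $W$, hence coincide; since $W$ can be taken around any point of $U\cap U'$ and agreement of two affine connections is a local (indeed pointwise) condition, $\nabla^{U}$ and $\nabla^{U'}$ agree on all of $U\cap U'$. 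Therefore the local connections glue to a single globally defined torsion-free affine connection $\nabla$ on $M$.

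It remains to verify the two asserted properties of $\nabla$. For the stated preservation property: if $\tilde F = \lambda F$ is a Berwald metric in the conformal class defined on some open set, then near each point its associated connection coincides with $\nabla$ by construction, so $\nabla$-parallel transport preserves $\tilde F$; in particular this applies to any globally conformally Berwald representative if one exists, and more importantly the statement is meant to cover all Berwald metrics conformal to $F$. For uniqueness: any torsion-free affine connection $\nabla'$ whose parallel transport preserves some Berwald metric conformal to $F$ in a neighborhood of each point must, near each point, be the associated connection of that Berwald metric, hence equal to $\nabla$ there, hence equal to $\nabla$ globally. I expect the only subtle point is the bookkeeping in the overlap/gluing step — namely being careful that "same associated connection" really is a local statement so that agreement on small connected sets propagates — but this is routine once Theorem~\ref{Th2} is in hand; the genuine mathematical content is entirely contained in that theorem.
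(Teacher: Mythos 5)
Your argument is essentially the paper's: everything is reduced to Theorem~\ref{Th2}, which forces any two Berwald representatives of the (non-Riemannian) conformal class over a common connected domain to be proportional, and proportional Berwald metrics admit the same preserving connections; the local-to-global gluing that you spell out is left implicit in the paper's one-sentence proof. The only step you use without justification is that a Berwald metric determines \emph{the} associated connection uniquely: both the well-definedness of your $\nabla^{U}$ and the phrase ``must, near each point, be the associated connection of that Berwald metric'' in your uniqueness argument require that there is exactly one torsion-free affine connection whose parallel transport preserves a given Finsler metric, which is not part of the definition of Berwald. This fact is supplied in Section~\ref{subsec1}: parallel transport preserving $F$ maps unit balls to unit balls, hence preserves the Binet--Legendre metric $g_F$, so any torsion-free connection preserving $F$ is the Levi-Civita connection of $g_F$. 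The paper's proof runs precisely through this identification (proportional Berwald metrics have proportional Binet--Legendre metrics, hence equal Levi-Civita connections); once you cite that fact, your proof is complete and coincides in substance with the paper's.
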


We call this connection the \emph{canonical connection} on a locally conformally Berwald non-Riemannian manifold $(M, F)$.

\begin{proof} By Theorem~\ref{Th2}, all Berwald metrics in a (non-Riemannian) conformal class are proportional, so their Binet-Legendre metrics are proportional, and hence their Levi-Civita connections (which are the associated connections of the corresponding Berwald metrics) are equal.
\end{proof}

\begin{corollary}\label{cor:si}
A locally conformally Berwald metric on a simply-connected manifold $\tilde M$ is globally conformally Berwald.
\end{corollary}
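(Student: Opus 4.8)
The plan is to dispose of the Riemannian case immediately and, when $F$ is non-Riemannian, to glue the locally defined conformal factors into one global factor; the compatibility of these factors on overlaps will come from Theorem~\ref{Th2}, and simple-connectedness will let me integrate the resulting closed $1$-form. If $F$ is Riemannian it is already Berwald (with associated connection its Levi-Civita connection), so there is nothing to prove; assume henceforth that $F$ is non-Riemannian. I would cover $\tilde M$ by connected open sets $U_\alpha$ (replace each neighborhood in the definition of local conformal Berwaldness by the connected component of it containing the base point) carrying positive functions $\lambda_\alpha$ with $\lambda_\alpha F$ Berwald; since rescaling a non-Riemannian Finsler metric by a positive function keeps it non-Riemannian, each $\lambda_\alpha F$ is a non-Riemannian Berwald metric.

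The key step is the behaviour on overlaps. On any connected component of an overlap $U_\alpha\cap U_\beta$, both $\lambda_\alpha F$ and $\lambda_\beta F=(\lambda_\beta/\lambda_\alpha)(\lambda_\alpha F)$ are Berwald, so Theorem~\ref{Th2} forces $\lambda_\beta/\lambda_\alpha$ to be a positive constant there. Hence $d\ln\lambda_\alpha=d\ln\lambda_\beta$ on $U_\alpha\cap U_\beta$, and the locally defined closed $1$-forms $d\ln\lambda_\alpha$ patch together into a single globally defined closed $1$-form $\eta$ on $\tilde M$. I expect this overlap-compatibility, i.e. the appeal to Theorem~\ref{Th2}, to be essentially the only real content of the proof; everything else is routine patching.

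Since $\tilde M$ is simply-connected, $\eta=dh$ for some smooth $h\colon\tilde M\to\R$; then on each $U_\alpha$ the function $h-\ln\lambda_\alpha$ is a constant $c_\alpha$, i.e. $e^h=e^{c_\alpha}\lambda_\alpha$ on $U_\alpha$. Put $\mu:=e^h$, a positive function on all of $\tilde M$. On $U_\alpha$ the metric $\mu F$ equals the constant multiple $e^{c_\alpha}(\lambda_\alpha F)$ of a Berwald metric, hence is Berwald there, and its associated connection there is the Levi-Civita connection of its Binet-Legendre metric $g_{\mu F}=\mu^2 g_F$. Since $\mu^2 g_F$ is a globally defined metric on $\tilde M$, these local associated connections are all the restriction of a single torsion-free affine connection $\nabla$ on $\tilde M$ (which is the canonical connection of Corollary~\ref{cor:existcan}).

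It remains to see that $\mu F$ is Berwald on all of $\tilde M$ with associated connection $\nabla$. Given a smooth path $\gamma$ from $x$ to $y$, compactness of $\gamma([0,1])$ lets me subdivide $\gamma$ into finitely many subarcs each contained in a single $U_\alpha$; applying the local preservation property $(\mu F)_{\gamma(t_i)}(Pv)=(\mu F)_{\gamma(t_{i-1})}(v)$, with $P$ the $\nabla$-parallel transport, on each subarc and composing shows that $\nabla$-parallel transport preserves $\mu F$ along all of $\gamma$. Hence $\mu F$ is a globally defined Berwald metric conformally related to $F$, which is precisely the statement that $F$ is globally conformally Berwald.
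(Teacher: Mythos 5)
Your overall strategy is the same as the paper's: the paper phrases it as parallel sections of a flat $\mathbb{R}_{>0}$-bundle (local parallel sections exist by hypothesis, Theorem~\ref{Th2} makes the transition functions locally constant, and simple-connectedness yields a global parallel section), which is exactly your \v{C}ech/de Rham patching of the $\lambda_\alpha$'s written out explicitly; your final verification that $\mu F$ is Berwald globally, via the Binet--Legendre metric and subdivision of paths, is also sound.

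There is, however, one step whose justification as written does not do the job: to apply Theorem~\ref{Th2} on a connected component $W$ of $U_\alpha\cap U_\beta$ and conclude that $\lambda_\beta/\lambda_\alpha$ is constant, you need the \emph{restriction} of $\lambda_\alpha F$ to $W$ to be non-Riemannian, and your remark that ``rescaling a non-Riemannian metric keeps it non-Riemannian'' only addresses rescaling, not restriction: a priori a metric that is non-Riemannian on $\tilde M$ could still be Riemannian on some open set, and on such an overlap Theorem~\ref{Th2} would give you nothing, breaking the cocycle argument. The gap is easy to close, but it needs an argument: the set $E=\{x\in\tilde M : F_x \text{ is a Euclidean norm}\}$ is closed (being Euclidean is the parallelogram identity for $F_x^2$, a closed condition since $F$ is continuous in $x$), and it is open, because on each connected $U_\alpha$ the metric $\lambda_\alpha F$ is Berwald, so its parallel transports carry the norm at one point isometrically to the norm at any other point of $U_\alpha$; hence if $F_x$ is Euclidean at one point of $U_\alpha$ it is Euclidean on all of $U_\alpha$. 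Since $\tilde M$ is connected, either $E=\tilde M$ (the Riemannian case you already dismissed) or $E=\emptyset$, and in the latter case every restriction of $\lambda_\alpha F$ to every open set is non-Riemannian, so Theorem~\ref{Th2} applies on each overlap component and the rest of your proof goes through unchanged.
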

\begin{proof}
If the metric is Riemannian, it is nothing to prove. If it is not, then by Theorem~\ref{Th2}, the set of conformally Berwald metrics in the conformal class can be identified with sections of a one-dimensional $(\mathbb{R}_{>0})$-bundle over the manifold; Berwald metrics in the conformal class correspond to parallel sections of a certain linear connection. Since by construction there exist local parallel sections, the existence of a global parallel section (i.e., of a Berwald metric in the conformal class) follows from the simply-connectedness of $\tilde M$.
\end{proof}

\begin{corollary} \label{cor:6} Let $(M, F)$ be a simply-connected conformally Berwald non-Riemannian metric, and $\tilde F$ be a Berwald metric in the conformal class ($\tilde F$ exists by Corollary~\ref{cor:si}). Then any conformal transformation of $F$, i.e., any diffeomorphism $\varphi:M \to M$ that sends $F$ to a conformally equivalent metric, is a homothety of $\tilde F$, and therefore is a homothety of its Binet-Legendre metric $g_{\tilde F}$.
\end{corollary}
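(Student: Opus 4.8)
The plan is to deduce this from Theorem~\ref{Th2} together with the naturality of the Berwald property and of the Binet-Legendre construction under diffeomorphisms. Since $M$ is simply-connected, Corollary~\ref{cor:si} provides a Berwald metric $\tilde F$ in the conformal class of $F$; write $\tilde F = e^{h} F$. Let $\psi \colon M \to M$ be a conformal transformation of $F$, so $\psi^{*} F = e^{\sigma} F$ for some smooth function $\sigma$. Then $\psi^{*}\tilde F = e^{(h\circ\psi)+\sigma}\, F$ lies in the conformal class of $F$, hence of $\tilde F$.

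Next I would check that $\psi^{*}\tilde F$ is itself Berwald. If $\nabla$ is the associated connection of $\tilde F$ (the Levi-Civita connection of $g_{\tilde F}$), then the pullback connection $\psi^{*}\nabla$ is torsion-free, and its parallel transport $P'_{\gamma} = (d\psi_{\gamma(1)})^{-1}\circ P_{\psi\circ\gamma}\circ d\psi_{\gamma(0)}$ preserves $\psi^{*}\tilde F$ by a one-line computation using the Berwald property of $\tilde F$; so $\psi^{*}\tilde F$ is Berwald with associated connection $\psi^{*}\nabla$. It is also non-Riemannian, because being Riemannian is a pointwise property of the Minkowski norm that is preserved by the linear isomorphisms $d\psi_{x}$. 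Thus $\tilde F$ and $\psi^{*}\tilde F$ are two non-Riemannian Berwald metrics in the same conformal class, and Theorem~\ref{Th2} forces the conformal factor between them to be constant: $\psi^{*}\tilde F = c\,\tilde F$ for some constant $c > 0$, i.e.\ $\psi$ is a homothety of $\tilde F$.

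It remains to transfer this to $g_{\tilde F}$. Here I would use the two elementary properties of the Binet-Legendre construction recalled in Section~\ref{subsec1}: it is natural under linear isomorphisms, which — exactly as for the maps $P_{\gamma}$ used there — gives $g_{\psi^{*}\tilde F} = \psi^{*} g_{\tilde F}$; and a change of variables in the defining integral shows $g_{cF} = c^{2} g_{F}$. Combining, $\psi^{*} g_{\tilde F} = g_{\psi^{*}\tilde F} = g_{c\tilde F} = c^{2} g_{\tilde F}$, so $\psi$ is a homothety of $g_{\tilde F}$.

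I expect no genuine obstacle: the statement is essentially a formal corollary of Theorem~\ref{Th2}. The only items needing (routine) care are the verification that a diffeomorphism pullback of a Berwald metric is again Berwald and the bookkeeping of scaling exponents for the Binet-Legendre metric; simple-connectedness of $M$ is used only via Corollary~\ref{cor:si}, to guarantee the existence of $\tilde F$.
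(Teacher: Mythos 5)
Your proposal is correct and follows essentially the same route as the paper: pull back $\tilde F$ by the conformal transformation, observe it is a (non-Riemannian) Berwald metric in the same conformal class, and invoke Theorem~\ref{Th2} to get a constant factor. The extra verifications you supply (pullback of a Berwald metric is Berwald, naturality and the scaling $g_{c\tilde F}=c^2 g_{\tilde F}$ of the Binet-Legendre metric) are exactly the routine details the paper leaves implicit, and they check out.
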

\begin{proof}
The conformal transformation $\varphi$ maps $\tilde F$ to a Berwald metric which is conformally equivalent to $F$. By Theorem \ref{Th2}, it is a constant multiple of $\tilde F$.
\end{proof}

\subsection{Incompleteness and reducibility of locally conformally Berwald metric on a closed manifold}
\label{subsec3}

Suppose now the manifold $M$ is closed (compact, with no boundary).

\begin{theorem} \label{th2}
Let $(M, F)$ be a closed, locally conformally Berwald non-Riemannian manifold. The canonical connection is complete if and only if the manifold is globally conformally Berwald.
\end{theorem}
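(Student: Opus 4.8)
The plan is to prove both directions by contraposition, working with the canonical connection $\nabla$ and its Binet-Legendre metric $g:=g_{\tilde F}$ (for a local Berwald representative $\tilde F$, which is well-defined up to scale by Corollary~\ref{cor:existcan}). First I would pass to the universal cover $\pi:\tilde M\to M$ with deck group $G=\pi_1(M)$, pull back $F$ to $\tilde F$ on $\tilde M$; by Corollary~\ref{cor:si} the lifted metric is globally conformally Berwald, so there is a genuine Berwald metric on $\tilde M$ in the conformal class with Levi-Civita connection $\tilde\nabla=\pi^*\nabla$ and metric $\tilde g$. By Corollary~\ref{cor:6}, $G$ acts on $(\tilde M,\tilde g)$ by homotheties, hence by affine transformations of $\tilde\nabla$. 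The completeness of $\nabla$ on $M$ is equivalent to completeness of $\tilde\nabla$ on $\tilde M$ (affine completeness passes to and from covers along $G$-equivariant geodesics), and ``$F$ globally conformally Berwald'' means exactly that the $G$-action on $(\tilde M,\tilde g)$ is by isometries, i.e.\ all the homothety factors are $1$; this last equivalence should be spelled out using the one-dimensional parallel-section bundle picture from the proof of Corollary~\ref{cor:si}, noting $M$ compact so a global Berwald metric downstairs exists iff the holonomy of that connection around loops in $M$ is trivial, iff $G$ acts with trivial homothety character.

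For the ``if'' direction ($F$ globally conformally Berwald $\Rightarrow$ $\nabla$ complete): if $F$ is itself Berwald on the closed $M$ then $\nabla$ is the Levi-Civita connection of the compact Riemannian manifold $(M,g_F)$, hence geodesically complete by Hopf-Rinow; if merely globally conformally Berwald, replace $F$ by its Berwald representative, which has the same canonical connection, and conclude the same way.

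For the ``only if'' direction I would argue the contrapositive: assume $F$ is \emph{not} globally conformally Berwald and show $\nabla$ is incomplete. Then on $\tilde M$ the group $G$ acts by homotheties with a nontrivial factor, so pick $\phi\in G$ with $\phi^*\tilde g=c^2\tilde g$, $c\neq 1$; WLOG $c<1$. The key step is the standard ``homothety has a fixed point at infinity / incomplete geodesic'' argument: iterating $\phi$ contracts $\tilde g$-distances by $c^n\to 0$, and since $M=\tilde M/G$ is compact one produces, using a compactness/limit argument on a fundamental domain, a $\tilde g$-geodesic ray of finite $\tilde g$-length that is not $\nabla$-extendable (equivalently, the $\nabla$-exponential map fails to be defined on all of $T_x\tilde M$ for some $x$), so $(\tilde M,\tilde g)$ is geodesically incomplete, hence $\tilde\nabla$, hence $\nabla$, is incomplete. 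Concretely: take any unit-speed $\tilde g$-geodesic $\gamma$ emanating from a point $p$ in a fundamental domain toward the contracting direction, push it by $\phi$; the images glue into a geodesic whose total $\tilde g$-length is bounded by $\sum c^n\cdot(\text{const})<\infty$, yet it cannot reach a limit point in $\tilde M$ because such a point would be $G$-fixed, contradicting freeness of the action (or: its projection to the compact $M$ would have to converge while the geodesic ``escapes'' the geometry).

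The main obstacle I expect is this last incompleteness construction: making rigorous that a nontrivial homothety of a simply-connected Riemannian manifold admitting a compact quotient forces geodesic incompleteness. One must rule out the ``complete'' alternative cleanly — the cleanest route is to recall (as the authors promise in the remark after Theorem~\ref{thm:reduction}) that a discrete cocompact group of homotheties acting freely on a \emph{complete} simply-connected Riemannian manifold consists of isometries; granting that statement, ``not globally conformally Berwald'' $\Rightarrow$ ``$G$ not all isometries'' $\Rightarrow$ ``$(\tilde M,\tilde g)$ not complete'' $\Rightarrow$ ``$\tilde\nabla$ not complete'' $\Rightarrow$ ``$\nabla$ not complete,'' which is exactly the contrapositive we want, and the remaining work is just the bookkeeping identifying affine (in)completeness upstairs and downstairs and identifying ``globally conformally Berwald'' with the homothety character of $G$ being trivial.
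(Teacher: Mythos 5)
Your proposal is correct and follows essentially the same route as the paper: the easy direction is Hopf--Rinow for the Levi-Civita connection of a metric on the compact quotient, and the hard direction rests on the same key fact, namely that a free, cocompact homothety action on a complete simply-connected manifold must consist of isometries, which the paper proves in one line (a homothety of ratio $k\in(0,1)$ of a complete metric space is a contraction, hence has a fixed point by Banach, contradicting freeness), after which the Berwald metric $\tilde F$ on the universal cover descends to $M$. The only imprecision in your sketch is the ``glued geodesic ray'': the concatenation of minimizing segments from $\phi^n(p)$ to $\phi^{n+1}(p)$ is not a geodesic, but this is harmless, since the finite-length Cauchy orbit together with freeness already yields metric incompleteness, and Hopf--Rinow converts that into incompleteness of the canonical connection --- which is exactly the paper's fixed-point argument run in the contrapositive direction.
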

\begin{proof} The sufficiency is obvious: if the metric $F$ is globally conformally Berwald, then the associated connection is the Levi-Civita connection of a Riemannian metric on a closed manifold and is therefore complete.

To prove the necessity we show that if the canonical connection is complete then the manifold is globally conformally Berwald. Consider the universal cover $\tilde M$ and the action of the fundamental group $\pi_1(M)$ on $\tilde M$ by deck transformations. By Corollary \ref{cor:si}, there exists a Berwald metric $\tilde F$ on $\tilde M$ in the conformal class of the lift of the metric $F$. By Corollary \ref{cor:6}, the action of the fundamental group $\pi_1(M)$ on $\tilde M$ is the action by homotheties of the Binet-Legendre metric $g_{\tilde F}$ on $\tilde M$.

By assumption, the lift of the canonical connection is complete. Then the Binet-Legendre metric $g_{\tilde F}$ is also complete. If there is an element of $\pi_1(M)$ which acts by a homothety $\phi$ with coefficient $k \in (0, 1)$, then $\phi$ has a fixed point, which contradicts the fact that the action of $\pi_1(M)$ is free.
Therefore $\pi_1(M)$ acts by isometries of $(\tilde M, g_{\tilde F})$ and of $(\tilde M, \tilde F)$. Then the metric $\tilde F$ projects to a Berwald metric on $M$. 
\end{proof}

\begin{corollary} \label{cor:7}
The locally conformally Berwald metric from Example~\ref{ex1} is not globally conformally Berwald.
\end{corollary}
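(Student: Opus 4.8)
The plan is to deduce the claim from Theorem~\ref{th2}. Throughout I assume, as one must for the statement to be non-vacuous, that the Minkowski norm $F$ is non-Riemannian: if $F$ were Euclidean, then $\tfrac1{\|x\|}F$ would induce a Riemannian, hence Berwald, metric on $M$, which is trivially globally conformally Berwald. Under this assumption the Finsler metric induced on the closed manifold $M=(\mathbb{R}^n\setminus\{0\})/\mathbb{Z}\cong S^{n-1}\times S^1$ is locally conformally Berwald (by Example~\ref{ex1}) and non-Riemannian, so Theorem~\ref{th2} reduces the problem to showing that the canonical connection of $M$ is \emph{not} complete.

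To this end I would identify the canonical connection through its lift to the universal cover $\tilde M=\mathbb{R}^n\setminus\{0\}$. There the lifted metric is conformal to the Minkowski metric $F$, which is Berwald with associated connection $\nabla_0$, the restriction to $\mathbb{R}^n\setminus\{0\}$ of the flat affine connection of $\mathbb{R}^n$. Since $F$ is non-Riemannian, Theorem~\ref{Th2} shows that every Berwald metric in this conformal class is proportional to $F$, hence has $\nabla_0$ as its associated connection; so the lift of the canonical connection of $M$ is exactly $\nabla_0$ (this is also the content of Corollary~\ref{cor:existcan}). Now $\nabla_0$-geodesics are affinely parametrized straight lines, and the geodesic $t\mapsto(1-t)x$ with $x\neq 0$ cannot be extended past $t=1$; hence $\nabla_0$ is incomplete. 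Since geodesics of the canonical connection of $M$ correspond to $\nabla_0$-geodesics under the covering projection, completeness of the canonical connection on the \emph{compact} manifold $M$ would entail completeness of $\nabla_0$ on $\tilde M$, a contradiction. Therefore the canonical connection is incomplete, and Theorem~\ref{th2} yields the corollary.

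An equivalent and slightly more elementary route bypasses the canonical connection: suppose $\lambda\colon M\to\mathbb{R}_{>0}$ made the induced metric conformally Berwald; then its lift $\tilde\lambda$ to $\tilde M$ would make $\tilde\lambda\cdot\tfrac1{\|x\|}F$ a Berwald metric conformal to $F$ on the connected manifold $\mathbb{R}^n\setminus\{0\}$, so by Theorem~\ref{Th2} one would get $\tilde\lambda(x)=c\|x\|$ for some constant $c>0$; but $\tilde\lambda$ is invariant under $x\mapsto qx$ whereas $c\|qx\|=cq\|x\|\neq c\|x\|$ since $q\neq 1$, a contradiction. The only step that needs a little care, in either approach, is the identification of the Berwald representative upstairs with a multiple of $F$ — this is precisely where non-Riemannianity is used, via Theorem~\ref{Th2} — and it presents no real difficulty; I expect no serious obstacle.
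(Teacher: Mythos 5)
Your main argument is essentially the paper's own proof: the canonical connection of the quotient lifts to the standard flat connection on $\mathbb{R}^n\setminus\{0\}$, which is incomplete because the origin is removed, and Theorem~\ref{th2} then gives the conclusion; your explicit restriction to non-Euclidean $F$ correctly spells out the non-Riemannian hypothesis that is implicitly needed to invoke the canonical connection and Theorem~\ref{th2}. Your second route (lifting a hypothetical global conformal factor $\lambda$ to $\tilde\lambda$ on $\mathbb{R}^n\setminus\{0\}$, deducing $\tilde\lambda(x)=c\|x\|$ from Theorem~\ref{Th2}, and contradicting invariance under $x\mapsto qx$) is a correct, more elementary alternative that bypasses Theorem~\ref{th2} and the canonical connection entirely, though it is not needed for the corollary.
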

\begin{proof}
The (lift of the) canonical connection is the standard flat connection on $\Rn$, which is clearly not complete, since we removed the point $0$.
\end{proof}

From Section~\ref{subsec1} we know that the Binet-Legendre metric of a non-Riemannian Berwald Finsler metric either has a reducible restricted holonomy group or is a locally symmetric Riemannian metric of rank at least 2. The next theorem shows that the second possibility can never happen in our settings.

\begin{theorem} \label{th3}
Let $(M, F)$ be a closed, locally conformally Berwald manifold, which is not globally conformally Berwald. Then the restricted holonomy group of the canonical connection is reducible.
\end{theorem}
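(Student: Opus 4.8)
The plan is to argue by contradiction, reducing everything to the dichotomy recorded just before the theorem. Since a Riemannian metric is Berwald and hence globally conformally Berwald, the hypothesis forces $F$ to be non-Riemannian, so by Corollary~\ref{cor:existcan} the canonical connection exists. Lifting to the universal cover $\tilde M$, Corollary~\ref{cor:si} provides a Berwald representative $\tilde F$ of the conformal class, whose associated connection is the lift of the canonical connection and, by Section~\ref{subsec1}, equals the Levi-Civita connection of $g:=g_{\tilde F}$. As $\tilde M$ is simply connected, the restricted holonomy group of the canonical connection coincides with the full holonomy group $\mathrm{Hol}(g)$. By the remark preceding the theorem, $\mathrm{Hol}(g)$ is either reducible --- which is the desired conclusion --- or $g$ is the metric of a locally symmetric space of rank at least $2$. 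So it suffices to exclude the latter possibility; note that in that case $\mathrm{Hol}(g)$ is irreducible and therefore $g$ is not flat (recall $n\ge 2$).

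Next I bring in the deck group $G=\pi_1(M)$. By Corollary~\ref{cor:6} every $\phi\in G$ is a homothety of $\tilde F$, hence of $g$: $\phi^*g=\kappa_\phi g$ for some constant $\kappa_\phi>0$. If all $\kappa_\phi$ equalled $1$, then $\tilde F$ would be $G$-invariant and, as in the proof of Theorem~\ref{th2}, would descend to a Berwald metric conformal to $F$ on $M$, contradicting the hypothesis. Hence there is a \emph{proper} homothety $\phi\in G$, $\phi^*g=\kappa g$ with $\kappa:=\kappa_\phi\ne 1$.

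Now I exploit the curvature tensor. A homothety is an affine transformation of the Levi-Civita connection (a constant rescaling of a metric does not change its Christoffel symbols), so $\phi^*R=R$ for the $(1,3)$-curvature tensor $R$ of $g$. Taking $g$-norms and using $\phi^*g=\kappa g$ yields $|R|^2_g\circ\phi=\kappa^{-2}|R|^2_g$. On the other hand, in the locally symmetric case $\nabla R=0$ and $\nabla g=0$, so $|R|^2_g$ is constant on $\tilde M$; together with $\kappa\ne 1$ this forces $|R|^2_g\equiv 0$, i.e.\ $g$ is flat. This contradicts the non-flatness observed in the first paragraph, so $\mathrm{Hol}(g)$ --- hence the restricted holonomy of the canonical connection --- is reducible.

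I expect the only step needing care to be the existence of a proper homothety: one must know that all Berwald metrics in the conformal class of the lift of $F$ are mutually proportional with \emph{constant} ratios (Theorem~\ref{Th2}), so that $\phi\mapsto\kappa_\phi$ is a well-defined homomorphism $G\to\mathbb{R}_{>0}$ whose triviality is exactly equivalent to $F$ being globally conformally Berwald. Everything else --- the Berger--Simons dichotomy, already packaged in the remark preceding the theorem, and the elementary behaviour of $|R|^2_g$ under a homothety --- is routine.
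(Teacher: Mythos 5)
Your proposal is correct and follows essentially the same route as the paper's proof: reduce to the Berger--Simons dichotomy via the non-Riemannian hypothesis, and in the locally symmetric case use the constancy of the squared curvature norm together with its $k^{-2}$-scaling under a homothety of the deck group (Corollaries~\ref{cor:si} and~\ref{cor:6}) to force all deck transformations to be isometries, so that the Berwald metric descends and contradicts the hypothesis. The only difference is a harmless reshuffling: you first extract a proper homothety (otherwise descent already gives the contradiction) and then derive flatness, while the paper concludes directly that every homothety coefficient is $1$.
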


\begin{proof} As $(M,F)$ is not globally conformally Berwald, it is non-Riemannian. Furtermore, the universal cover $(\tilde M, \tilde F)$ is globally conformally Berwald by Corollary~\ref{cor:si}, so there exists a Berwald metric $F'$ on $\tilde M$ conformally equivalent to $\tilde F$. If the restricted holonomy group of the canonical connection for $(M,F)$ is irreducible, then   the Binet-Legendre metric $g_{F'}$
  is a locally symmetric Riemannian metric. Then the squared norm $\|R_{F'}\|^2$ of the curvature tensor of $g_{F'}$ is constant on $\tilde M$ and is positive, as $g$ is not flat. If an element of $\pi_1(M)$ acts by a homothety $\phi$ with coefficient $k$, we get $\|R_{\phi^*F'}\|^2 = \|R_{k F'}\|^2 = k^{-2} \|R_{F'}\|^2$. Therefore $\pi_1(M)$ acts by isometries of $(\tilde M, F')$. Then the metric $F'$ projects to a Berwald metric on $M$ which is conformally equivalent to $F$.
\end{proof}

\subsection{Proof of Theorem \ref{thm:reduction}}
\label{ss:pfth1}
The implication \eqref{it:red2} $\Longrightarrow$ \eqref{it:red1} follows from the construction in Example~\ref{Ex3} and Theorem~\ref{th2}.

To prove the implication \eqref{it:red1} $\Longrightarrow$ \eqref{it:red2}, consider a locally, but not globally, Berwald non-Riemannian metric $F$ on $M$. By Corollary~\ref{cor:si}, there exists a Berwald metric $\tilde F$ on the universal cover $\tilde M$ such that $\tilde F$ lies in conformal class of the lift of $F$. We denote by $g$ the Binet-Legendre metric of $\tilde F$. By Corollary~\ref{cor:6}, the deck transformation action of the fundamental group $\pi_1(M)$ on $\tilde M$ is homothetic with respect to $g$. By Theorem~\ref{th2}, the metric $g$ is not complete. Finally, by Theorem~\ref{th3}, the metric has reducible restricted holonomy group.
\qed

\section{ Proof of Theorem \ref{thm:main}}
\label{s:proof2}

\subsection{Plan of proof} 
\label{s:setup}

Let $(\tilde{M}, g)$ be a simply-connected, connected, incomplete Riemannian manifold whose holonomy group is reducible, and let $G$ be a group of homotheties of $(\tilde{M}, g)$ which acts freely, discretely and in such a way that the quotient $M=\tilde{M}/G$ is a closed manifold; we denote by $\pi: \tilde{M}\to M=\tilde{M}/G$ the natural projection.

We want to show that $(\tilde{M}, g)$ is isometric to the direct product $(\mathbb{R}^k, g_{\mathrm{standard}}) \times (N, h)$. As we will see, the assumption of real analyticity of the metric which we imposed in the theorem, is not necessary until the very end of the proof (and we hope that the conclusion remains true without it -- see the Remark in the end of Section~\ref{ss:further}). 

\smallskip

Since the holonomy group is reducible, $(\tilde{M}, g)$ carries two orthogonal, totally geodesic, $G$-ivariant foliations $\mathcal{F}_1$ and $\mathcal{F}_2$ (of positive dimension; we do not assume that the foliations are irreducible. Without loss of generality we assume that $G$ preserves the foliations; one can always achieve that by passing to a subgroup of $G$ of finite index). We denote by $\mathcal{F}_i(x)$ the leaf of $\mathcal{F}_i, \; i=1,2$,  passing through $x \in \tilde M$. Let $g_i, \; i=1,2$, be the restrictions of the metric $g$ to the leaves of foliations $\mathcal{F}_i, \; i=1,2$, and $R_i, \; i=1,2$, the squared norms of the curvature tensors of $g_i$, respectively.

Let $d$ be the distance function on $(\tilde{M}, g)$ and $\overline{M}$ the metric completion of $(\tilde{M},g)$. Denote $M_\infty= \overline{M}\setminus \tilde{M}$ and consider the positive function
\begin{equation*}
d_\infty: \tilde{M}\to \mathbb{R}_{> 0}, \quad x \mapsto \inf\{d(x,y)\mid y\in M_\infty\}.
\end{equation*}
By the triangle inequality, the function $d_\infty$ is continuous.

For $x \in \tilde M$, let $p\in M_\infty$ be such that $d(x,p)=d_\infty(x)$ (in $\overline{M}$), and let $\gamma:[0, d_\infty(x)] \to \overline{M}$ be a continuous map such that the restriction $\gamma_{[0, d_\infty(x))}$ is a geodesic of $\tilde M$ parametrized by the arclength. We call $\gamma$ a \emph{minimal $g$-geodesic connecting $x$ with $p$}, or simply a \emph{minimal geodesic starting at $x$}.

The proof is based on the following two propositions.

\begin{proposition} \label{p:Ri=0}
At every point $x \in \tilde{M}$ we have either $R_1(x)= 0$ or $R_2(x)=0$ \emph{(}or both\emph{)}. More precisely, if there exists a minimal geodesic $\gamma$ starting at $x$, which does not lie in the leaf of $\mathcal{F}_i$ passing through $x$, then $R_i = 0$ in a neighborhood of $x$ in $\tilde M$.
\end{proposition}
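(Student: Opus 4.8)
The plan is to combine three things: the behaviour of curvature and of the boundary distance $d_\infty$ under the homotheties in $G$, the local de~Rham splitting coming from reducibility of $\mathrm{Hol}(g)$, and the elementary geometry of minimal geodesics to $M_\infty$. First I would record the scaling. If $\phi\in G$ satisfies $\phi^{*}g=\mu g$ for a positive constant $\mu$, then $\phi$ extends to $\overline M$ as a similarity with ratio $\sqrt{\mu}$, preserving $M_\infty$, so $d_\infty\circ\phi=\sqrt{\mu}\,d_\infty$; since (after passing to a subgroup of finite index) $G$ preserves each of $\mathcal{F}_1,\mathcal{F}_2$, the map $\phi$ restricts to a homothety with factor $\mu$ between the corresponding leaves, whence $R_i\circ\phi=\mu^{-2}R_i$. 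Consequently $R_i\,d_\infty^4$ is $G$-invariant, descends to a continuous function on the compact quotient, and is therefore bounded; this yields a universal estimate $R_i\le C\,d_\infty^{-4}$ on $\tilde M$. I also fix a compact set $K\subset\tilde M$ surjecting onto the quotient, so that $d_\infty$ is pinched between two positive constants on $K$. Independently, reducibility of the holonomy gives around every point a neighbourhood isometric to a Riemannian product $U_1\times U_2$ in which $\mathcal{F}_1,\mathcal{F}_2$ are the two factor foliations; there $R_i$ depends only on the $U_i$-coordinate (it is the squared curvature norm of the factor $(U_i,g_i)$), in particular $R_i$ is constant along the leaves of $\mathcal{F}_j$, $j\ne i$, and the operator $R(\cdot,v)v$ along any geodesic preserves $T\mathcal{F}_1$ and $T\mathcal{F}_2$ and acts on $T\mathcal{F}_i$ only through the $\mathcal{F}_i$-component of $v$.

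Now let $\gamma:[0,L]\to\overline M$ be a minimal geodesic from $x=\gamma(0)$, $L=d_\infty(x)$, $\gamma(L)=p\in M_\infty$, not lying in $\mathcal{F}_i(x)$. The reverse triangle inequality $d_\infty(x)\le d(x,\gamma(t))+d_\infty(\gamma(t))$ together with $d(x,\gamma(t))\le t$ force $d_\infty(\gamma(t))=L-t$ on $[0,L)$, so every sub- and super-arc of $\gamma$ is again minimal. The velocity splits as $\dot\gamma=\dot\gamma^{(1)}+\dot\gamma^{(2)}$ with each summand parallel along $\gamma$, hence of constant norm, and the hypothesis ``$\gamma\not\subset\mathcal{F}_i(x)$'' says precisely that the $\mathcal{F}_j$-component is nonzero, $j\ne i$. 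I claim $R_i(x)=0$. Suppose not, $R_i(x)=r>0$. Choose a contracting $\phi\in G$ ($\phi^{*}g=\mu g$, $\mu<1$; such $\phi$ exists because $G$ is essential, and $\mu$ may be taken arbitrarily small by replacing $\phi$ by a power). Each $\phi^{n}\circ\gamma$, reparametrised by arclength, is a minimal geodesic starting at $\phi^{n}(x)$, of length $\mu^{n/2}L\to0$, still transverse to $\mathcal{F}_i$ with the same angle, and along it $R_i$ equals $\mu^{-2n}$ times its value along $\gamma$. Translating $\phi^{n}(x)$ back into $K$ by suitable $\psi_n\in G$, the curves $\Gamma_n:=\psi_n\phi^{n}\circ\gamma$ are minimal geodesics starting in $K$ whose lengths equal $d_\infty$ of their (relatively compact) starting points and so lie in a fixed positive interval, still transverse to $\mathcal{F}_i$, and — by the $G$-invariance of $R_i\,d_\infty^4$ — along $\Gamma_n$ the function $R_i$ stays comparable to $r$ uniformly in $n$.

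Passing to a subsequence, the $\Gamma_n$ converge to a minimal geodesic $\Gamma_\infty$ starting at some $y_\infty\in K$, still transverse to $\mathcal{F}_i$, along which $R_i$ is bounded below by a positive constant; this reduces the problem to a relatively compact model. The contradiction is then obtained by confronting the local product structure near $\Gamma_\infty$ with the behaviour of $d_\infty$ as $\Gamma_\infty$ runs into $M_\infty$: where $d_\infty$ is smooth its Hessian satisfies a Riccati equation $\tfrac{d}{ds}\,\mathrm{Hess}\,d_\infty+(\mathrm{Hess}\,d_\infty)^{2}+R(\cdot,\dot\Gamma_\infty)\dot\Gamma_\infty=0$ compatible with the splitting $T\mathcal{F}_1\oplus T\mathcal{F}_2$, and transversality of $\Gamma_\infty$ to $\mathcal{F}_i$ pins down the $\mathcal{F}_i$-block of this equation as $d_\infty\to0$ in a way that cannot hold with the $\mathcal{F}_i$-curvature bounded away from zero, i.e.\ it forces $R_i=0$. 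Once $R_i(x)=0$ is known, I would upgrade it to $R_i\equiv0$ on a neighbourhood of $x$, either by a stability argument for minimal geodesics (a minimal geodesic from $x'$ converges as $x'\to x$ to one from $x$, which may be taken to be $\gamma$, and nearby ones are then transverse as well) or by exploiting that $R_i$ is constant along $\mathcal{F}_j$-leaves and running the argument at enough nearby base points. Finally the first assertion of the Proposition follows: from any $x$ pick any minimal geodesic; it cannot lie in both $\mathcal{F}_1(x)$ and $\mathcal{F}_2(x)$, so $R_i(x)=0$ for the appropriate $i$.

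The step I expect to be the main obstacle is the extraction of this contradiction, and within it the borderline case in which $\gamma$ lies entirely inside the complementary leaf $\mathcal{F}_j(x)$: then $\dot\gamma^{(i)}=0$, the second variation of $\gamma$ along variations valued in $T\mathcal{F}_i$ contains no $\mathcal{F}_i$-curvature, and minimality of $\gamma$ alone gives nothing. Here one must genuinely use the interplay of incompleteness with the homothety — for instance that a contracting $\phi\in G$ has, by completeness of $\overline M$ and freeness of the $G$-action, a unique fixed point lying in $M_\infty$, near which $\phi$ produces an exactly self-similar geometry — to force flatness of the $\mathcal{F}_i$-leaf. A secondary technical nuisance is that minimal geodesics to $M_\infty$ need neither be unique nor vary continuously with the base point, so the convergence $\Gamma_n\to\Gamma_\infty$ and the stability statement above require a little care (Hausdorff convergence of the sets of minimal geodesics, and checking that limits still reach $M_\infty$ at the expected distance).
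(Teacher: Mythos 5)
Your preliminary reductions are sound and essentially parallel to the paper's: the scaling identities $d_\infty\circ\phi=\sqrt{\mu}\,d_\infty$ and $R_i\circ\phi=\mu^{-2}R_i$, the $G$-invariance of $R_i\,d_\infty^4$ (which plays the role of the Fried metric $g_F=d_\infty^{-2}g$ of the paper), and the renormalization of the rescaled geodesics into a fixed compact set are all correct. The genuine gap is the step where the contradiction is supposed to appear. After the reduction you are left with a minimal geodesic $\Gamma_\infty$ starting in a compact set, transverse to $\mathcal{F}_i$, with $R_i$ bounded below at its starting point, and you try to finish by a purely local argument along $\Gamma_\infty$ (a Riccati equation for $\mathrm{Hess}\,d_\infty$ whose $\mathcal{F}_i$-block is claimed to be incompatible with $R_i$ bounded away from zero as $d_\infty\to 0$). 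No such local incompatibility exists. Take $\tilde M=(\mathbb{R}^2\times S^2)\setminus\{(p,q)\}$: it is simply connected with reducible holonomy, $M_\infty$ is the single point $(p,q)$, the minimal geodesic from a generic nearby point is transverse to both foliations, $d_\infty$ is smooth near it and satisfies the usual Riccati equation, and $R_2$ (the spherical factor) is bounded away from zero along it. So the configuration you want to contradict is perfectly realizable once the cocompact homothety group is no longer invoked -- and your final step no longer invokes it; the proposition itself is false without the group. There are secondary problems in the same step: $d_\infty$ is in general only Lipschitz, there is no curvature sign to feed into a comparison argument, and the lower bound on $R_i$ along $\Gamma_\infty$ was established only at its initial point (the invariant $R_i\,d_\infty^4$ is known to be positive only at $t=0$ on $\gamma$). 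The neighbourhood claim and the borderline case $\gamma\subset\mathcal{F}_j(x)$ are likewise left unresolved; the appeal to a fixed point of a contracting homothety on $M_\infty$ is not developed into an argument.

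For comparison, the paper extracts the contradiction by using the group action again, twice, combined with the local product structure. First (Lemma~\ref{l:tube}): if $R_1\neq0$ somewhere in fixed-size $d_F$-balls around $\gamma(t_i)$ with $t_i\to d_\infty(x)$, pulling these points back by homotheties $h_i\in G$ makes $R_1$ blow up like $k_i^{-2}$; the rectangle Lemma~\ref{l:rectangle} together with \cite[Lemma~4.3]{BM} transports the blow-up points along $\mathcal{F}_2$-leaves onto the fixed leaf $\mathcal{F}_1(x)$, where they accumulate at the interior point $\exp_x(d_\infty(x)v_1)\in\tilde M$, contradicting finiteness of the curvature there. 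This only gives $R_i=0$ on a $d_F$-tube around the tail of $\gamma$. Second, to propagate vanishing back to $x$ and a neighbourhood in the transverse case, the paper constructs a flat Euclidean cone near $p$ inside that tube and uses recurrence of $\pi(\gamma(t_i))$ in the compact quotient to produce elements $h_jh_i^{-1}\in G$ mapping the whole initial segment $\gamma([0,t_i])$ into the flat cone, forcing the curvature to vanish at $h_jh_i^{-1}(x)$ and hence at $x$; in the borderline case ($\gamma$ contained in the complementary leaf) it suffices to note that $R_i$ is constant along that leaf, so the vanishing near the far end of $\gamma$ propagates to $x$ directly. Some mechanism of this kind, re-using the group beyond the initial normalization, is what your sketch is missing.
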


\begin{proposition}\label{l:complete}
Suppose a minimal geodesic $\gamma$ starting at $x\in \tilde M$ lies on the leaf $\mathcal{F}_2(x)$ \emph{(}respectively, $\mathcal{F}_1(x)$\emph{)}. Then the leaf $\mathcal{F}_1(x)$ \emph{(}respectively, $\mathcal{F}_2(x)$\emph{)} is complete and flat and the restriction of $d_\infty$ to it is a constant.
\end{proposition}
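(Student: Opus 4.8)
The plan is to show that the hypothesis propagates to every point of the leaf $\mathcal F_1(x)$, and then to read off the three conclusions from this. By symmetry assume the minimal geodesic $\gamma$ issuing from $x$ lies in $\mathcal F_2(x)$, so that we must show that $\mathcal F_1(x)$ is complete and flat and that $d_\infty$ is constant on it.

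First I would extract flatness near the geodesic. Since $\mathcal F_1(x)\cap\mathcal F_2(x)=\{x\}$ and a minimal geodesic is non-constant, $\gamma$ does not lie in $\mathcal F_1(x)$, so Proposition~\ref{p:Ri=0} gives $R_1=0$ near $x$; moreover minimality of $\gamma$ and the triangle inequality give $d_\infty(\gamma(t))=d_\infty(x)-t$ for $t\in[0,d_\infty(x))$, realised by the sub-arc $\gamma|_{[t,d_\infty(x)]}\subset\mathcal F_2(\gamma(t))$, so in fact $R_1=0$ on a neighbourhood of $\gamma([0,d_\infty(x)))$. I would also record the consequence of the local product structure that $R_1$ is constant along the leaves of $\mathcal F_2$ and $R_2$ along the leaves of $\mathcal F_1$. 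Since $R_1(x)=0$ (otherwise every minimal geodesic at $x$ would lie in $\mathcal F_1(x)$, contradicting $\gamma\subset\mathcal F_2(x)$) and $g$ is not flat, the relevant curvature lives on $\mathcal F_2$-leaves; in the principal case $R_2$ is a non-zero constant on the whole leaf $\mathcal F_1(x)$, and then the contrapositive of Proposition~\ref{p:Ri=0} forces \emph{every} minimal geodesic issuing from a point $z\in\mathcal F_1(x)$ to lie in $\mathcal F_2(z)$, and forces $R_1\equiv 0$ near $\mathcal F_1(x)$, so $\mathcal F_1(x)$ is flat.

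The workhorse is a totally geodesic flat rectangle. Given a unit-speed geodesic $c$ of $\mathcal F_1(x)$ with $c(0)=x$, let $F(s,t)$ be the map whose slices $F(s,\cdot)$ are the $g$-geodesics with $F(s,0)=c(s)$ and $\partial_t F(s,0)$ the parallel transport of $\dot\gamma(0)$ along $c|_{[0,s]}$, so $F(0,\cdot)=\gamma$. Parallelism and orthogonality of the distributions tangent to $\mathcal F_1,\mathcal F_2$ kill all mixed curvature terms $R(T\mathcal F_1,T\mathcal F_2)$, so the Jacobi equation for $\partial_s F$ along $F(s,\cdot)$ reduces to $\nabla_t^2\partial_s F=0$; as $\partial_s F(s,0)\in T\mathcal F_1$ with vanishing initial covariant $t$-derivative, $\partial_s F$ stays in $T\mathcal F_1$ and is $\nabla_t$-parallel, while $\partial_t F$ stays in $T\mathcal F_2$ and is $\nabla_t$-parallel; a further use of the curvature commutation identity and the vanishing of the mixed curvature gives $\nabla_s\partial_s F\equiv 0$. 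Hence $F^*g=ds^2+dt^2$, $F$ is totally geodesic, and for each $t<d_\infty(x)$ the curve $s\mapsto F(s,t)$ is a unit-speed geodesic of the leaf $\mathcal F_1(\gamma(t))$.

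Finally I would run a connectedness argument on the set $S$ of points $z\in\mathcal F_1(x)$ with $d_\infty(z)=d_\infty(x)$ admitting a minimal geodesic inside $\mathcal F_2(z)$: $x\in S$; $S$ is closed by continuity of $d_\infty$ and a limiting argument for minimal geodesics (using completeness of $\overline M$); and $S$ is open because, given $z\in S$ with minimal geodesic $\gamma_z\subset\mathcal F_2(z)$, the rectangle over a short geodesic in $\mathcal F_1(x)$ from $z$ to a nearby $z'$ produces a geodesic from $z'$, lying in $\mathcal F_2(z')$, of length $d_\infty(z)$ and terminating on $M_\infty$, which together with the fact that every minimal geodesic at $z'$ already lies in $\mathcal F_2(z')$ forces $d_\infty(z')=d_\infty(z)$ and $z'\in S$. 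Thus $S=\mathcal F_1(x)$, so $d_\infty\equiv d_\infty(x)>0$ there; hence no geodesic of this leaf can leave $\tilde M$ in finite time, so $\mathcal F_1(x)$ is complete, while its flatness was already shown. The step I expect to be the real obstacle is the claim that the slices $F(s,\cdot)$ actually reach $M_\infty$ in time $\le d_\infty(x)$ rather than running forever — as they do in the flat model $\tilde M=\mathbb R^2\setminus\{0\}$, which is exactly why non-flatness is indispensable. Making this work requires controlling the incompleteness of the leaves $\mathcal F_2(c(s))$ containing the slices; the inputs are that $R_2$ is a non-zero constant along $\mathcal F_1(x)$ (so those leaves are genuinely non-flat, pinning their minimal geodesics inside them), together with the structure of the $G$-action near $M_\infty$, in particular the fixed point on $M_\infty$ of a contracting homothety in $G$ — which exists since $\overline M$ is complete while $\tilde M$ is not, and is where compactness of $M=\tilde M/G$ enters.
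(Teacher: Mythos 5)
Your overall frame (pin every minimal geodesic from points of $\mathcal{F}_1(x)$ into the $\mathcal{F}_2$-leaves via Proposition~\ref{p:Ri=0} and constancy of $R_2$ along $\mathcal{F}_1(x)$; get flatness of the leaf from Proposition~\ref{p:Ri=0}; get completeness from constancy of $d_\infty$) is the same as the paper's, and your restriction to the case $R_2\neq 0$ on the leaf matches what the paper's proof actually uses. But the central step --- that $d_\infty$ is constant on $\mathcal{F}_1(x)$ --- is exactly where your argument has a gap, and you concede as much (``the real obstacle''). Your openness argument claims that transporting the minimal geodesic $\gamma_z\subset\mathcal{F}_2(z)$ across a short leaf segment from $z$ to $z'$ yields a geodesic from $z'$ ``of length $d_\infty(z)$ and terminating on $M_\infty$''. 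Neither half of this is justified: the flat rectangle is only guaranteed on a domain whose diagonal stays inside the ball of radius $d_\infty(z)$ on which $\exp_z$ is defined, so the far slice is only defined up to length roughly $\sqrt{d_\infty(z)^2-d(z,z')^2}$, not $d_\infty(z)$; and even where defined, the transported slice need not terminate on the metric boundary at all (it may be extendable), so no upper bound on $d_\infty(z')$, let alone equality, follows. The auxiliary inputs you invoke to repair this (non-vanishing of $R_2$ on the nearby $\mathcal{F}_2$-leaves, the fixed point on $M_\infty$ of a contracting homothety in $G$) are true statements but are not assembled into an argument, so the constancy of $d_\infty$ --- and with it completeness --- is not actually proved.

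The paper closes precisely this gap by a different mechanism that never requires the transported geodesic to reach $M_\infty$. Working along a leaf geodesic $\tau$ with $l(t):=d_\infty(\tau(t))$, it applies the truncated rectangle of Lemma~\ref{l:rectangle} (whose proof is exactly the careful domain-of-definition argument your Jacobi-field construction of $F(s,t)$ glosses over) with corner at $\tau(t_1)$: since the minimal geodesic at $\tau(t_2)$ lies in $\mathcal{F}_2(\tau(t_2))$ and cannot be extended past parameter $l(t_2)$, while the rectangle shows the geodesic in that direction is defined for all $s<\sqrt{l(t_1)^2-(t_2-t_1)^2}$, one gets the two-sided inequality $|l(t_1)^2-l(t_2)^2|\le (t_2-t_1)^2$. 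Subdividing $[t_1,t_2]$ into $n$ equal pieces and telescoping improves this to $|l(t_1)^2-l(t_2)^2|\le \tfrac1n (t_2-t_1)^2$, hence $l$ is constant; continuity and connectedness then give constancy of $d_\infty$ on the whole leaf, and completeness follows as you intended. So the missing idea in your proposal is this approximate inequality plus the subdivision trick, which replaces your unjustified exact transport of the minimal geodesic to the boundary; no appeal to the $G$-action or to fixed points on $M_\infty$ is needed at this stage.
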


Once these two propositions are proved, the theorem easily follows. Indeed, by the assumption, the metric $g$ is not flat. Then there is a point $x \in \tilde M$ where one of the curvatures, say $R_2(x)$, is nonzero, which by Proposition~\ref{p:Ri=0} implies that for any $y$ close to $x$, any minimal geodesic $\gamma$ starting at $y$ lies on the leaf $\mathcal{F}_2(y)$. Then by Proposition~\ref{l:complete}, the leaf $\mathcal{F}_1(y)$ is complete and flat. Then by real analyticity, all leaves of $\mathcal{F}_1$ are complete and flat and the theorem follows from \cite[Theorem~1]{PR}.

In the remaining part of the paper we prove the propositions. First in Section~\ref{s:prelim} we introduce and study the \emph{Fried metric}, which will play an important role in the proof. Then in Section~\ref{s:pfprop} we prove Proposition~\ref{p:Ri=0}, and in Section~\ref{ss:further}, Proposition~\ref{l:complete}.

\subsection{The Fried metric} 
\label{s:prelim}

Consider a (continuous) Riemannian metric $g_F$ on $\tilde{M}$ which is conformally equivalent to $g$ with the coefficient $\big(\tfrac{1}{d_\infty}\big)^2$:
\begin{equation*}
g_F:= \big(\tfrac{1}{d_\infty}\big)^2 g.
\end{equation*}
We will call $g_F$ the \emph{Fried metric}, because it is a generalization of a metric introduced by D.~Fried in \cite{Fried} (whose paper actually contains many ideas we use in the proofs), and will denote $d_F$ the distance function relative to that metric. (In this section we are not working with the Binet-Legendre metric, so that using the notation $g_F$ for the Fried metric should create no ambiguity).

It is easy to see that $g_F$ is $G$-invariant, which implies that it induces a Riemannian metric on $M=\tilde{M}/G$. We keep the notation $g_F$ for the projection of $g_F$ to $M$, and $d_F$ for the distance function on $(M, g_F)$. Note that $(M, g_F)$ is a complete $C^0$-Riemannian manifold.

\begin{Lemma}
\label{l:fried}
{\ }

\begin{enumerate}[{\rm (a)}]
 \item \label{it:d<df}
 For any $x, y \in \tilde{M}$ we have
 \begin{equation*}
 d(x,y) \le d_\infty(x) (e^{d_F(x,y)}-1).
 \end{equation*}

 \item \label{it:d>df}
 For any $x, y \in \tilde{M}$ with $d(x,y) < d_\infty(x)$ we have
 \begin{equation*}
 d(x,y) \ge d_\infty(x) (1-e^{-d_F(x,y)}).
 \end{equation*}
\end{enumerate}\end{Lemma}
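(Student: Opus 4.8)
The plan is to estimate the Riemannian distance $d(x,y)$ by comparing a $g$-geodesic with a $g_F$-geodesic, using the relation $g_F = (1/d_\infty)^2 g$ and the key Lipschitz-type fact that $d_\infty$ is $1$-Lipschitz with respect to $d$ (an immediate consequence of the triangle inequality: $|d_\infty(x)-d_\infty(y)| \le d(x,y)$). The heart of the argument is the following observation: if $\gamma$ is a curve parametrized by $g$-arclength $s$, then along $\gamma$ the function $t \mapsto d_\infty(\gamma(s))$ has derivative bounded in absolute value by $1$, so $d_\infty$ changes at most exponentially fast when measured against $g_F$-length, and vice versa.

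For part \eqref{it:d<df}, I would take a $g_F$-minimizing (or near-minimizing) curve $\gamma$ from $x$ to $y$, parametrized by $g_F$-arclength $u \in [0, \ell]$ with $\ell = d_F(x,y)$ (or $\ell$ slightly larger, then pass to the limit). Let $\rho(u) = d_\infty(\gamma(u))$. The $g$-speed of $\gamma$ at parameter $u$ is $\rho(u)$ (since $g = d_\infty^2 g_F$), so the $g$-length of $\gamma$ is $\int_0^\ell \rho(u)\,du$, which bounds $d(x,y)$ from above. On the other hand, since $d_\infty$ is $1$-Lipschitz for $d$ and the $g$-length of $\gamma$ restricted to $[0,u]$ is $\int_0^u \rho$, we get $|\rho(u) - \rho(0)| \le \int_0^u \rho(v)\,dv$, hence $\rho'(u) \le \rho(u)$ in the sense of Dini derivatives, so by Gr\"onwall $\rho(u) \le \rho(0) e^{u} = d_\infty(x) e^{u}$. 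Substituting, $d(x,y) \le \int_0^\ell d_\infty(x) e^{u}\,du = d_\infty(x)(e^{\ell}-1) = d_\infty(x)(e^{d_F(x,y)}-1)$, which is the claim; taking $\ell \to d_F(x,y)$ finishes it.

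For part \eqref{it:d>df}, assuming $d(x,y) < d_\infty(x)$, I would instead take a $g$-minimizing geodesic $\sigma$ from $x$ to $y$ parametrized by $g$-arclength $s \in [0, L]$, $L = d(x,y)$; note $L < d_\infty(x)$ guarantees $\sigma$ stays inside $\tilde M$ and in fact $d_\infty(\sigma(s)) \ge d_\infty(x) - s > 0$ along it, so the conformal factor is controlled. The $g_F$-speed of $\sigma$ at $s$ is $1/d_\infty(\sigma(s)) \le 1/(d_\infty(x)-s)$, so $d_F(x,y) \le \int_0^L \frac{ds}{d_\infty(x)-s} = -\log\!\big(1 - \tfrac{L}{d_\infty(x)}\big)$. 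Rearranging: $1 - \tfrac{L}{d_\infty(x)} \ge e^{-d_F(x,y)}$, i.e. $L \ge d_\infty(x)(1 - e^{-d_F(x,y)})$, which is exactly \eqref{it:d>df}.

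I expect the main technical point to be handling the low regularity: $g_F$ is only a $C^0$ metric (since $d_\infty$ is merely continuous, not smooth), so "geodesics" and arclength parametrizations must be understood in the metric-space sense, and the Gr\"onwall step should be phrased via upper Dini derivatives of the continuous function $\rho$ rather than genuine derivatives. One must also be slightly careful in \eqref{it:d<df} that a $g_F$-minimizing curve between $x$ and $y$ exists, or else work with $\varepsilon$-minimizers and let $\varepsilon \to 0$; since $(M,g_F)$ is a complete $C^0$ length space, the Hopf--Rinow theorem for length spaces supplies a minimizer, but stating it for $\varepsilon$-minimizers avoids even that. Everything else is a routine Gr\"onwall/integration estimate.
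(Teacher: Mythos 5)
Your proof is correct and takes essentially the same route as the paper: both parts rest on the $1$-Lipschitz property of $d_\infty$ with respect to $d$ (the triangle inequality) applied along a minimizing curve, and your part (b) is word-for-word the paper's argument. The only cosmetic difference is in part (a), where the paper parametrizes the $g_F$-minimizing curve by $g$-arclength and bounds $\tfrac{1}{d_\infty(\gamma(t))}\ge \tfrac{1}{d_\infty(x)+t}$ directly under the integral instead of invoking Gr\"onwall, which sidesteps the regularity and arclength-reparametrization concerns you raise (the paper likewise does not belabor the existence of minimizers).
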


\begin{proof}
\eqref{it:d<df} Let $d_F(x,y)=a$. Consider a minimizing geodesic $\gamma$ relative to $g_F$ connecting $x$ and $y$. 
We parameterize $\gamma$ by the arclength parameter $t$ relative to $g$. Let $\ell$ be its $g$-length and $\gamma(0)= x$ and $\gamma(\ell)= y$.
By construction,
\begin{equation} \label{eq1}
\int_{0}^\ell \tfrac{1}{d_\infty(\gamma(t))} dt \le a.
\end{equation}
By the triangle inequality we have
\begin{equation*}
\tfrac{1}{d_\infty(\gamma(t))} \ge \tfrac{1}{d_\infty(x)+ d(x, \gamma(t))}\ge \tfrac{1}{d_\infty(x)+ t} \, .
\end{equation*}
Combining this with \eqref{eq1} we obtain
\begin{equation*} 
\int_{0}^\ell \tfrac{1}{d_\infty(x)+ t} dt \le a,
\end{equation*}
so $\frac{d_\infty(x)+ \ell}{d_\infty(x)} \le e^a$. As $\ell\ge d(x,y)$ we get $\tfrac{1}{d_\infty(x)} d(x,y) \le \tfrac{\ell}{d_\infty(x)} \le e^a-1$, as required. 

\eqref{it:d>df} Let $d(x,y)=\ell < d_\infty(x)$. Consider a minimizing geodesic $\gamma$ relative to $g$ connecting $x$ and $y$. We parameterize $\gamma$ by the arclength parameter $t$ relative to $g$ so that $\gamma(0)= x$ and $\gamma(\ell)= y$. By the triangle inequality we have $ d_\infty(\gamma(t)) \ge d_\infty(x)-t$.
Then
\begin{equation*}
d_F(x,y) \le \int_{0}^\ell \tfrac{1}{d_\infty(\gamma(t))} dt \le \int_{0}^\ell \tfrac{1}{d_\infty(x)-t} dt = \ln\frac{d_\infty(x)}{d_\infty(x)-\ell} \, ,
\end{equation*}
and the claim follows.
\end{proof}

We will also need the following easy lemma which is close to \cite[Lemma~4.2,4.3]{BM}.
\begin{Lemma}
\label{l:rectangle}
Suppose at a point $x \in \tilde{M}$ the exponential map is defined on an open ball of radius $r>0$ in $T_x\tilde{M}$. Suppose $d(x,y) = \ell < r$ and $\gamma:[0,\ell] \to \tilde{M}$ is a shortest (arclength parameterized) geodesic with $\gamma(0)=x, \; \gamma(\ell)=y$. Let $v=\dot\gamma(0)$. Denote $v_i, \; i=1,2$, the projections of $v$ to the tangent spaces to the leaves of $\mathcal{F}_i$ passing through $x$ respectively. Then the geodesic  $\gamma_1(t):=\exp_x(tv_1)$  is defined for all $t \in [0,\ell]$ and lies on $\mathcal{F}_1$. Moreover, if $T(t)$ is the parallel vector field along $\gamma_1(t)$ with $T(0)=v_2$, then the map $\Phi: (t,s) \mapsto \exp_{\gamma_1(t)}(sT(t))$  is defined for all $(t,s) \in [0,\ell] \times [0,\ell]$, and its image is a flat totally geodesic immersed submanifold of $\tilde{M}$ with boundary (``rectangle"). In particular, $d(x, \Phi(t,s)) \le \sqrt{(t\|v_1\|)^2+(s\|v_2\|)^2}$.
\end{Lemma}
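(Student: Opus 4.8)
The plan is to build the rectangle leaf-by-leaf, using the fact that the two foliations $\mathcal{F}_1,\mathcal{F}_2$ are totally geodesic and orthogonal, so that a vector tangent to one of them stays tangent under parallel transport along a curve in the other, and geodesics tangent to a leaf stay in that leaf. First I would observe that since $\ell=d(x,y)<r$, the shortest geodesic $\gamma(t)=\exp_x(tv)$ with $\|v\|=1$ is defined on $[0,\ell]$ and in fact the exponential map at $x$ is a diffeomorphism on the ball of radius $r$; in particular $\exp_x(tv_1)$ and $\exp_x(t(v_1+sv_2))$ for $(t,s)\in[0,\ell]\times[0,\ell]$ all lie in this ball because $\|tv_1+tsv_2\|\le t\sqrt{\|v_1\|^2+\|v_2\|^2}\le \ell$ (using $s\le\ell$ and, say, $\|v_2\|\le\ell^{-1}\|v\|\cdot\|v_2\|$... more carefully, $\|v_1\|^2+\|v_2\|^2=\|v\|^2=1$, so $\|tv_1+tsv_2\|\le t\max(1,s)\le\ell$). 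Hence $\gamma_1(t):=\exp_x(tv_1)$ is well defined on $[0,\ell]$, and since $v_1$ is tangent to $\mathcal{F}_1(x)$ and $\mathcal{F}_1$ is totally geodesic, $\gamma_1$ stays on the leaf $\mathcal{F}_1(x)$.

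Next, let $T(t)$ be the parallel vector field along $\gamma_1$ with $T(0)=v_2$. Because $\mathcal{F}_2$ (equivalently, the orthogonal complement of $T\mathcal{F}_1$) is a parallel distribution along curves tangent to $\mathcal{F}_1$ — this is exactly the statement that the de Rham-type orthogonal splitting $T\tilde M=T\mathcal{F}_1\oplus T\mathcal{F}_2$ is $\nabla$-parallel, which holds since both foliations are totally geodesic with orthogonal tangent distributions — the field $T(t)$ is tangent to $\mathcal{F}_2(\gamma_1(t))$ for every $t$ and has constant length $\|v_2\|$. Now define $\Phi(t,s)=\exp_{\gamma_1(t)}(sT(t))$. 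For fixed $t$, the curve $s\mapsto\Phi(t,s)$ is a geodesic tangent to $\mathcal{F}_2$, hence stays on the leaf $\mathcal{F}_2(\gamma_1(t))$; to see it is defined for all $s\in[0,\ell]$, I would note that $\Phi(t,s)=\exp_x(tv_1)$ followed by a geodesic of $g$-length $s\|v_2\|\le\ell$, and then invoke that the whole configuration sits inside the image of the radius-$r$ ball under $\exp_x$: indeed $\exp_x(t v_1 + t s v_2)=\Phi(t,s)$ by the product structure of geodesics in a (locally) reducible manifold — geodesics split as pairs of geodesics in the two factors — and $\|t v_1+tsv_2\|\le\ell<r$. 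This simultaneously gives the bound $d(x,\Phi(t,s))\le\|tv_1+tsv_2\|=\sqrt{(t\|v_1\|)^2+(s\|v_2\|)^2}$.

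It remains to check that the image of $\Phi$ is a flat, totally geodesic immersed submanifold with boundary. Totally geodesic: the image is swept by $\mathcal{F}_2$-geodesics emanating $\nabla$-parallel-transported from an $\mathcal{F}_1$-geodesic, and using that the local product structure identifies a neighborhood with $\mathcal{F}_1(x)\times\mathcal{F}_2(x)$ metrically, $\Phi$ is just the product of the $\mathcal{F}_1$-geodesic $\gamma_1$ with the $\mathcal{F}_2$-geodesic $s\mapsto\exp_x(sv_2)$; the product of two geodesics is a totally geodesic flat immersed surface (the two one-parameter families commute and have parallel, hence flat, induced connection). Flatness then follows because the induced metric is $ (t\|v_1\|)^2 \oplus (s\|v_2\|)^2$ in these coordinates (the $t$- and $s$-curves are unit-speed geodesics up to the constant factors $\|v_1\|,\|v_2\|$, meeting orthogonally, with the mixed metric coefficient vanishing by parallelism of $T(t)$), i.e. the flat product metric on a rectangle. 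The main obstacle — and the point requiring care — is the global statement that $\Phi$ is defined on the full square $[0,\ell]\times[0,\ell]$ rather than just near the corner: this is where one genuinely uses that $\ell<r$ together with the local de Rham splitting to realize every $\Phi(t,s)$ as $\exp_x$ of a vector of norm $\le\ell$, so that no incompleteness of $\tilde M$ is ever encountered inside the rectangle.
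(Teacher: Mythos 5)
Your argument has a genuine gap at its central step: the identity $\Phi(t,s)=\exp_x(tv_1+sv_2)$ (which you in fact write, inconsistently, as $\exp_x(tv_1+tsv_2)$, so that your final equality $\|tv_1+tsv_2\|=\sqrt{(t\|v_1\|)^2+(s\|v_2\|)^2}$ is false as stated) is justified only by the phrase ``geodesics split as pairs of geodesics in the two factors''. That splitting is a statement about a global Riemannian product. Here $\tilde M$ is only known to have reducible holonomy, i.e.\ to be a \emph{local} product: since $\tilde M$ is incomplete, the de Rham theorem does not apply --- indeed, producing a (partial) global splitting is the whole point of Theorem~\ref{thm:main}, so invoking it here is circular in spirit. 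To relate $\exp_{\gamma_1(t)}(sT(t))$, which is based at a point far from $x$, to $\exp_x(tv_1+sv_2)$, one must propagate the local product structure along the entire configuration, i.e.\ one must already know that the swept region is a flat totally geodesic rectangle that never meets the metric boundary of $\tilde M$ --- which is exactly the conclusion of the lemma. Your closing sentence concedes that this is ``the point requiring care'', but no argument is given: the danger is precisely that for some intermediate $t$ the $\mathcal{F}_2$-geodesic $s\mapsto\exp_{\gamma_1(t)}(sT(t))$ could run into $M_\infty$ before $s=\ell$, and the mere fact that a candidate point $\exp_x(tv_1+sv_2)$ exists in $\tilde M$ does not by itself prevent this unless the identification has already been established. (The assertion that $\exp_x$ is a diffeomorphism on the ball of radius $r$ is also unjustified --- the hypothesis only says the exponential map is \emph{defined} there --- though this claim is not actually needed.)

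For comparison, the paper closes this gap by a continuation argument in $t$: by compactness, a neighborhood of the initial side $\Phi(0,[0,\ell])$ is isometric to the product of a small ball in $\mathcal{F}_1(x)$ with a segment, so $\Phi$ is defined and yields a flat totally geodesic rectangle for $t\in[0,\varepsilon)$, on which one has $d(x,\Phi(t,s))^2\le(t\|v_1\|)^2+(s\|v_2\|)^2<\ell^2$; if the supremum $\varepsilon'$ of admissible $\varepsilon$ were less than $\ell$, the points $\Phi(\varepsilon'-\tfrac1n,s')$ would form a Cauchy sequence at distance less than $\ell<r$ from $x$, contradicting the fact that $\exp_x$ is defined on the open ball of radius $r$ (so no such sequence can escape to the metric boundary). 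Some argument of this kind --- building the rectangle, its flatness, and the comparison with $\exp_x$ simultaneously and step by step --- is what your proposal needs to supply; as written, the key global statement is asserted rather than proved.
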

\begin{proof}
The first claim (that $\gamma_1(t)$ is defined for all $t \in [0,\ell]$  and lies  on $\mathcal{F}_1$) is obvious, as $\|v_1\| \le \ell <r$, and $\mathcal{F}_1$ is totally geodesic. The second claim is trivial  for  $v_1=0$ or $v_2=0$ since in these cases the rectangle degenerates to a naturally parameterized geodesic of length $\ell < r$ lying on one of the leaves.

\vspace{1ex}
Suppose $v_1, v_2 \ne 0$. Then $\Phi(0,s)$ is defined for $s \in [0, \ell]$. By compactness, there is an open neighborhood of the segment $\Phi(0,[0, \ell])$ which is isometric to the product of a small ball around $x$ on $\mathcal{F}_1$ and the segment $[0, \ell]$. It follows that $\Phi(t,[0, \ell])$ is defined for all $t \in [0, \ve)$ for some $\ve >0$, and its image is a flat totally geodesic rectangle in $(\tilde{M},g)$. Then for every $(t,s) \in [0, \ve) \times [0, \ell]$ we have $d(x, \Phi(t,s))^2 \le (t\|v_1\|)^2 + (s\|v_2\|)^2$. Let $\ve'$ be the supremum of such $\ve$. Suppose that $\ve' < \ell$ and let $s' \in (0, \ell]$ be the supremum of those $s$ for which $\Phi(\ve',s)$ is defined (so that ``$\Phi(\ve',s')$ lies on the metric boundary of $(\tilde{M},g)$"). But then the sequence of points $\Phi(\ve'-\frac1n,s')$ is a Cauchy sequence and $d(x, \Phi(\ve'-\frac1n,s')) \le (((\ve'-\frac1n)\|v_1\|)^2 + (s'\|v_2\|)^2)^{1/2} \le (\ve'\|v_1\|)^2 + (\ell\|v_2\|)^2)^{1/2} < \ell$, which contradicts the fact that $\exp_x$ is defined on the open ball of radius $r > \ell$.
\end{proof}

\subsection{Proof of Proposition~\ref{p:Ri=0}}
\label{s:pfprop}

Take an arbitrary point $x \in \tilde{M}$. Let $p\in M_\infty$ be such that $d(x,p)=d_\infty(x)$ (in $\overline{M}$) and let $\gamma:[0, d_\infty(x)] \to \overline{M}$ be a minimal $g$-geodesic connecting $x$ with $p$. Let $t \in [0, d_\infty(x)]$ be the $g$-natural parameter on $\gamma$ so that $\gamma(0)= x$ and $\gamma(d_\infty(x))= p$. Note that the $g$-exponential map at $x$ is defined on the ball of radius $d_\infty(x)$ in $T_x\tilde{M}$. Denote its image $B\subset \tilde{M}$; note that $p$ lies on the metric boundary of $B$.

Let $v:=\dot \gamma(0)$ be the initial velocity vector of $\gamma$ and let $\alpha \in [0, \frac{\pi}2]$ be the angle which $v$ makes with $\mathcal{F}_1$. As $\mathcal{F}_1$ is $g$-totally geodesic, the angle between $\gamma(t)$ and $\mathcal{F}_1$ remains constant. In particular, if $\alpha=0$ (respectively, $\alpha=\frac{\pi}2$), then $\gamma$ lies on $\mathcal{F}_1$ (respectively, on~$\mathcal{F}_2$).


\begin{Lemma}\label{l:tube} 
Suppose $\alpha \ne 0$. Then there exists $t_0 \in (0, d_\infty(x))$ and $\varepsilon > 0$ such that at all the points in the open $\ve$-neighborhood of the segment $\gamma(t_0, d_\infty(x))$ relative to $d_F$ we have $R_1=0$.
\end{Lemma}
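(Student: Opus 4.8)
The goal is to propagate the condition ``$R_1=0$'' along the tail of the minimal geodesic $\gamma$. The key mechanism is the homothety group $G$: since $M=\tilde M/G$ is compact and $g$ is incomplete, the action of $G$ must be essential (it cannot consist of isometries only, by the completeness argument in the proof of Theorem~\ref{th2}). Concretely, I would first observe that the Fried metric $g_F=(1/d_\infty)^2 g$ is $G$-invariant and descends to a \emph{complete} metric on the compact quotient $M$; this is what lets me ``zoom in'' on the end $M_\infty$. The strategy is: near the endpoint $p$ of $\gamma$, find points arbitrarily close (in $d$) to $p$ where I can apply the rectangle construction of Lemma~\ref{l:rectangle}, then use homotheties of $G$ to blow these small flat rectangles up to rectangles of \emph{fixed} size, and finally let the scaling factor tend to its extreme value to conclude that a whole leaf-direction worth of curvature vanishes at interior points of $\gamma$.

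\textbf{Step 1: a self-similar sequence along the end.} Since $M$ is compact, the images $\pi(\gamma(t))$ as $t\to d_\infty(x)$ subconverge in $M$; lifting back, there is a sequence $t_n\uparrow d_\infty(x)$ and elements $\phi_n\in G$ with $\phi_n(\gamma(t_n))$ converging to some point $x_*\in\tilde M$. Because $\gamma$ approaches $M_\infty$, the homothety coefficients $k_n$ of $\phi_n$ must satisfy $k_n\to\infty$ (the $g$-distance $d_\infty(\gamma(t_n))=d_\infty(x)-t_n\to 0$ must be rescaled up to something bounded below at $x_*$, since $d_\infty$ is continuous and positive on $\tilde M$). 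So $\phi_n$ expands the tiny neighborhood of $\gamma(t_n)$ to a neighborhood of $x_*$ of definite size.

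\textbf{Step 2: build a flat rectangle near the end and transport it.} At $\gamma(t_n)$, the $g$-exponential map is defined on a ball of radius $d_\infty(x)-t_n$, which shrinks; but the velocity $\dot\gamma(t_n)$ makes a fixed angle $\alpha\ne 0$ with $\mathcal F_1$, so its $\mathcal F_1$-component $v_1^{(n)}$ has length $\asymp\cos\alpha\cdot(d_\infty(x)-t_n)$. Apply Lemma~\ref{l:rectangle} with $x\leftarrow\gamma(t_n)$, $y\leftarrow\gamma(t_n+\ell_n)$ for a suitable $\ell_n<d_\infty(x)-t_n$: this yields a flat totally geodesic rectangle on which, in particular, the leaf $\mathcal F_1$-direction is flat. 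Now push this rectangle forward by $\phi_n$: since $\phi_n$ is a homothety it preserves flatness, total geodesy, and the foliations (we arranged $G$ to preserve $\mathcal F_1,\mathcal F_2$), and it scales the rectangle up to definite size near $x_*$. Passing to a limit, I get a genuine flat totally geodesic rectangle near $x_*$ spanning a nondegenerate piece of an $\mathcal F_1$-leaf, which forces $R_1=0$ on an open set near $x_*$. Pulling back by $\phi_n^{-1}$ (a homothety, hence preserving the \emph{vanishing} of $R_1$) gives $R_1=0$ on a $d_F$-neighborhood of $\gamma(t_n)$ for all large $n$ — in particular on a $d_F$-neighborhood of the tail segment $\gamma(t_0,d_\infty(x))$ once $t_0$ is taken close enough to $d_\infty(x)$.

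\textbf{The main obstacle.} The delicate point is uniformity: I need the rectangle produced by Lemma~\ref{l:rectangle} at $\gamma(t_n)$ to have, \emph{after rescaling by $k_n$}, a size bounded below independently of $n$, so that the limiting rectangle at $x_*$ is nondegenerate. This requires controlling $\ell_n$ from below relative to $d_\infty(x)-t_n$ — essentially showing I can choose $\ell_n$ comparable to $d_\infty(x)-t_n$ while staying inside the exponential ball — and it requires that the rescaled base points $\phi_n(\gamma(t_n))$ do not escape to the boundary $M_\infty$ of $\tilde M$, which is exactly where the continuity and positivity of $d_\infty$, together with the estimates of Lemma~\ref{l:fried} relating $d$ and $d_F$, are needed. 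Getting a clean $d_F$-neighborhood (rather than a $d$-neighborhood, which would shrink) of the tail of $\gamma$ is the reason the Fried metric is introduced, and handling this conversion carefully is the crux of the argument.
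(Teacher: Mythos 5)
Your Step 2 contains the fatal gap: the claim that a limit of rescaled rectangles from Lemma~\ref{l:rectangle} forces $R_1=0$ on an open set near $x_*$ is unfounded. The ``rectangle'' of Lemma~\ref{l:rectangle} is a two-dimensional surface spanned by a geodesic tangent to $\mathcal{F}_1$ and a parallel field tangent to $\mathcal{F}_2$; such mixed surfaces are flat and totally geodesic \emph{automatically}, at every point of $\tilde M$, because the two orthogonal totally geodesic foliations give a local de~Rham product splitting. The content of Lemma~\ref{l:rectangle} is a statement about the domain of definition of the exponential map, not about curvature, so the existence of such rectangles near $x_*$ (at any scale) carries no information about $R_1$, the intrinsic curvature of the $\mathcal{F}_1$-leaves. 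Consequently your argument never actually produces vanishing of $R_1$ anywhere. Note also that mere boundedness of curvature near the interior point $x_*$ does not help: pulling back by $\phi_n^{-1}$ multiplies curvature by $k_n^{2}\to\infty$, so to conclude anything about the tail you would need the exact identity $R_1\equiv 0$ on a neighborhood of $x_*$, which is essentially the statement being proved.

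The paper handles exactly this difficulty by arguing by contradiction: if points with $R_1\neq 0$ existed at $d_F$-distance less than $\varepsilon$ from $\gamma(t_i)$ for $t_i\to d_\infty(x)$, the homotheties supplied by compactness (with coefficients $k_i\to 0$) would yield points $w_i$ near the tail with $R_1(w_i)=k_i^{-2}c\to\infty$. Curvature blow-up near the metric boundary $M_\infty$ is not by itself contradictory --- this is the possibility your sketch never confronts --- so the paper transports the value $R_1(w_i)$ along $\mathcal{F}_2$-directions, using Lemma~\ref{l:rectangle} together with \cite[Lemma~4.3(i)]{BM}, to points $u_i$ on the fixed leaf $\mathcal{F}_1(x)$, which converge to $\exp_x(d_\infty(x)v_1)$; this limit is an interior point of $\tilde M$ precisely because $\|v_1\|=\cos\alpha<1$, i.e.\ because $\alpha\neq 0$, and continuity of the curvature there gives the contradiction. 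Your use of the hypothesis $\alpha\neq 0$ is in fact backwards: you want it to make the $\mathcal{F}_1$-side of your rectangle nondegenerate (size proportional to $\cos\alpha$), but the lemma allows $\alpha=\tfrac{\pi}{2}$, where $\cos\alpha=0$ and that rectangle degenerates, whereas the hypothesis is needed only to keep the limit point off the boundary. Finally, even granting $R_1=0$ near $x_*$, your pulled-back neighborhoods are $d_F$-balls around the discrete points $\gamma(t_n)$ only; since the tail has infinite $d_F$-length, covering a uniform $\varepsilon$-tube around $\gamma(t_0,d_\infty(x))$ would require a further argument.
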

\begin{proof}
We first note that choosing $\ve$ to be smaller than $\ln 2$ we obtain that our neighborhood entirely lies in $B$, by Lemma~\ref{l:fried}\eqref{it:d<df} and the triangle inequality.

Furthermore, as $\pi: \tilde{M} \to M$ is a $d_F$-Riemannian cover and as $M$ is compact, there exists $\delta > 0$ such that whenever $d_F(y,z) < \delta$ for $y,z \in \tilde{M}$, we have $d_F(\pi(y),\pi(z))= d_F(y,z)$ (take $\delta$ to be half the injectivity radius of $(M, d_F)$).

Now take $\ve=\min(\frac13 \ln 2, \frac12\delta)$. By way of contradiction, suppose that there exists an increasing sequence of points $t_i \in (0, d_\infty(x))$ converging to $d_\infty(x)$ such that the $\ve$-neighborhood of each of the points $\gamma(t_i)$ relative to $d_F$ contains a point with $R_1 \ne 0$. As $M$ is compact, we can assume (passing to a subsequence if necessary) that all the points $\pi(\gamma(t_i))$ lie in an open ball $B_{\ve} (z)$ centered at some $z \in M$ of radius $\ve$ relative to $(M,d_F)$. Lifting up to $\tilde{M}$ we obtain a sequence of points $z_i \in \tilde{M}$ such that $\gamma(t_i)$ lies in the open $d_F$-ball of radius $\ve$ centered at $z_i$. Then by the choice of $\ve$, the open $d_F$-ball of radius $2\ve$ centered at $z_i$ entirely lies in $B$ and contains the open $d_F$-balls of radius $\ve$ centered at $\gamma(t_i)$. Moreover, such balls are pairwise disjoint and the restriction of $\pi$ to each of them is a global $d_F$-isometry.

As $\pi(z_i)=z$, there exists a sequence $h_i$ of elements in $G$ such that $h_i(z_1)=z_i$. Note that every $h_i$ acts on $\tilde{M}$ as an isometry of $d_F$ and as a homothety of $d$ with the coefficient $k_i=d_\infty(z_i)/d_\infty(z_1)$. By Lemma~\ref{l:fried}\eqref{it:d<df} and by construction we have $d_\infty(z_i) \le d(\gamma(t_i),z_i)+d_\infty(\gamma(t_i)) \le d_\infty(\gamma(t_i)) e^\ve=(d_\infty(x)-t_i) e^\ve$. Passing to a subsequence if necessary we can assume that $k_i$ monotonically decrease to zero. Now by our assumption the $d_F$-ball of radius $2\ve$ centered at $z_1$ contains a point $w$ such that $R_1(w) = c > 0$. Then the point $w_i=h_i(w)$ lies in the $d_F$-ball of radius $2\ve$ centered at $z_i$ and we have $R_1(w_i) = k_i^{-2}c \to \infty$.

Note that by the triangle inequality $d_F(\gamma(t_i), w_i) \le d_F(\gamma(t_i), z_i)+d_F(z_i, w_i) < 3\ve$, so by Lemma~\ref{l:fried}\eqref{it:d<df} $d(\gamma(t_i), w_i) < d_\infty(\gamma(t_i))(e^{3\ve}-1) < d_\infty(\gamma(t_i))$ by the choice of $\ve$. Applying Lemma~\ref{l:rectangle} to the point $\gamma(t_i)$ and a shortest $g$-geodesic joining $\gamma(t_i)$ and $w_i$ we find that there is a geodesic $\Gamma_1$ of length less than $d_\infty(\gamma(t_i))$ with the endpoints $\gamma(t_i)$ and some $p_i$ lying on the leaf $\mathcal{F}_1(\gamma(t_i))$ and another geodesic joining the points $w_i$ and $p_i$ lying on the leaf of $\mathcal{F}_2$. We get $R_1(p_i) = R_1(w_i) = k_i^{-2}c$. Now applying Lemma~\ref{l:rectangle} to the point $x$ and the geodesic segment $\gamma([0,t_i])$ we find that there is a geodesic of length $d_\infty(x) \cos \alpha$ with the endpoints $x$ and some $q_i$ lying on the leaf $\mathcal{F}_1(x)$ and another geodesic $\Gamma_2$ joining the points $q_i$ and $\gamma(t_i)$ lying on the leaf of $\mathcal{F}_2$. Moreover, again by Lemma~\ref{l:rectangle}, for any point $y \in \Gamma_2$, we have $d(x,y) \le d (x, \gamma(t_i))$, so $d_\infty(y) \le d_\infty(\gamma(t_i))$ by the triangle inequality. It follows that for all $y \in \Gamma_2$, the map $\exp_y$ is defined on an open ball of radius $d_\infty(\gamma(t_i))$. Then by \cite[Lemma~4.3(i)]{BM} (the proof of which for our $\tilde{M}$ is identical to that for $M_0$) there is a geodesic lying in a leaf of $\mathcal{F}_2$ and joining the point $p_i$ with a point $u_i$ lying on the leaf $\mathcal{F}_1(q_i)$ ($=\mathcal{F}_1(x)$). Note that $R(u_i)=R_1(p_i) = R_1(w_i) = k_i^{-2}c$ and that there is a geodesic of $\mathcal{F}_1$ joining the points $q_i$ and $u_i$ whose length is equal to the length of $\Gamma_1$ (``the projection" of $\Gamma_1$ along $\Gamma_2$), so that $d(q_i, u_i) < d_\infty(\gamma(t_i)) = d_\infty(x) - t_i$. It follows that when $i$ tends to infinity, the points $u_i$ converge to the limit $\lim_{i \to \infty}q_i = \lim_{i \to \infty}\exp_x(t_i v_1)= \exp_x(d_\infty(x) v_1)$, which lies in $B$ as $\|v_1\|<1$. On the other hand, $R(u_i) = k_i^{-2}c \to \infty$, a contradiction.
\end{proof}

We continue with the proof of the proposition. Note that if $\alpha=0$, then the geodesic $\gamma$ lies on the leaf of $\mathcal{F}_1(x)$. It follows that $R_2$ is constant along $\gamma$, so the proof is finished by Lemma~\ref{l:tube}. Similar arguments work for $\alpha=\frac{\pi}{2}$. We therefore assume that $\alpha \in (0, \frac{\pi}{2})$, so that $\gamma$ does not lie on any leaf. As it now follows from Lemma~\ref{l:tube}, there is a point $t_0 \in [0, d_\infty(x))$ and a number $\ve > 0$ such that the $d_F$-tube of radius $\ve$ around the segment $\gamma([t_0, d_\infty(x)))$ is $g$-flat. Now for a point $\gamma(t), \; t \in [t_0, d_\infty(x))$ let $y \in \tilde{M}$ satisfy $d(\gamma(t),y) < (1-e^{-\ve}) d_\infty(\gamma(t))$. Then by Lemma~\ref{l:fried}\eqref{it:d>df} we have $d_F(\gamma(t),y) < \ve$. Therefore the open $g$-ball of radius $(1-e^{-\ve}) d_\infty(\gamma(t))$ centered at $\gamma(t)$ lies in the open $g_F$-ball of radius $\ve$ centered at $\gamma(t)$. It follows that the open $d_F$-tube of radius $\ve$ around the segment $\gamma([t_0, d_\infty(x)))$ contains the union of open $g$-balls $B_{(1-e^{-\ve}) d_\infty(\gamma(t))}(\gamma(t))$, and moreover, the metric $g$ on this union is flat. It follows that this union contains an open (solid) Euclidean cone of revolution $\mathcal{C}$ of height $d_\infty(x)-t_0$, with the apex at $p$ whose axes is the Euclidean segment $\gamma((t_0, d_\infty(x)))$ and with the angle $\beta=\arcsin(1-e^{-\ve})$ between the axis and the directrix. We also note that by Lemma~\ref{l:fried}\eqref{it:d<df}, all the points of $\mathcal{C}$ lie in an $d_F$-neighborhood of $\gamma((t_0, d_\infty(x)))$ of radius $\ve'$, where $e^{\ve'}-1 < 1-e^{\ve}$ (note that this implies $\ve' < \ve$).

Now, similar to the proof of Lemma~\ref{l:tube}, take an increasing sequence of points $t_i \in (t_0, d_\infty(x))$ such that the points $\pi(\gamma(t_i))$ converge to a certain point $z \in M$ relative to $(M, g_F)$ and additionally such that the ($d_F$-)unit tangent vectors $V_i$ at the points $\pi(\gamma(t_i))$ converge to a certain unit vector $V \in T_vM$ (in the topology of the $d_F$-unit tangent bundle of $M$). We can assume that $d_F(z, \pi(\gamma(t_i))) < \frac12 \ve'$. Lifting up to $\tilde{M}$ we obtain a sequence of points $z_i=h_i(z_1)$ (such that $\pi(z_i)=z$ and that $h_i \in G$) and a sequence of open $d_F$-balls
$B_i$ of radius $\frac12 \ve'$ centered at $z_i$ and containing $\gamma(t_i)$. Note that all the balls $B_i$ lie in $\mathcal{C}$. Moreover, as $g$ and $g_F$ are conformally equivalent, we obtain that the unit tangent vectors $\dot\gamma(t_i)$ converge (after parallel translation to some fixed point relative to the flat metric of $\mathcal{C}$) to a certain fixed vector. We can now additionally require that the angle between $\dot\gamma(t_i)$ and $\dot\gamma(t_j)$ is less than $\frac14 \beta$ (passing to a subsequence, if necessary). Now take $j \gg i$ and consider the image of the geodesic segment $\gamma([0,t_i])$ under the action of the element $h_jh_i^{-1}$. This element acts as an isometry of $(\tilde{M},d_F)$; it maps $z_i$ to $z_j$ and hence the ball $B_i$ onto the ball $B_j$, and therefore the point $\gamma(t_i)$ to a certain point $y_i$ in $B_j$. By construction, $y_i \in \mathcal{C}$ and moreover, the image of the segment $\gamma([0,t_i])$ under $h_jh_i^{-1}$ is a geodesic of $(\tilde{M}, g)$ which starts at $y_i$ with the tangent vector $-\dot\gamma(t_i)$ and of length $k_jk_i^{-1}t_i$, where $k_i$ is the homothety coefficient of $h_i$ (relative to $(\tilde{M}, g)$). Taking $j$ very large we can make this length arbitrarily small, and moreover, as the tangent vector to the $g$-geodesic $h_jh_i^{-1}\gamma([0,t_i])$ at $y_i$ makes the angle less than $\frac12 \beta$ with the axis of the cone $\mathcal{C}$, we obtain that the whole image $h_jh_i^{-1}\gamma([0,t_i])$ lies entirely in $\mathcal{C}$. It follows that the curvature of $(\tilde{M},g)$ at the point $h_jh_i^{-1}(x)=h_jh_i^{-1}\gamma(0)$ vanishes, hence it also does at $x$. \qed

\subsection{Proof of Proposition \ref{l:complete}}
\label{ss:further}


In our assumptions, the function $R_2$ has the same nonzero value at all the points of $\mathcal{F}_1(x)$. By Proposition~\ref{p:Ri=0} it follows that all the minimal geodesics joining the points of $\mathcal{F}_1(x)$ to the metric boundary $M_\infty$ lie in the leaves of $\mathcal{F}_2$.

Let $\tau:(a,b) \to \tilde{M}$ be a naturally parameterized ($g$-)geodesic segment lying on $\mathcal{F}_1(x)$ such that $m:=\inf(d_\infty(\tau(t)) \, : \, t \in (a,b))>0$. Denote $l(t):=d_\infty(\tau(t))$. Choose an arbitrary open subinterval $I \subset (a,b)$ whose length is strictly smaller than $m$. The union of all such intervals $I$ covers $(a,b)$.

Now take arbitrary $t_1, t_2 \in I$ and denote $x_i=\tau(t_i)$. For every unit vector $X_1 \in T_{x_1}\mathcal{F}_2$ consider the geodesic $\rho:[0,l(t_1)) \to \tilde{M}$ such that $\rho(0)=x_1, \, \rho'(0)=X_1$. The geodesic $\rho$ lies on the leaf $\mathcal{F}_2(x_1)$ and is well-defined, as the exponential map at $x_1$. Let $X_2 \in T_{x_2}\mathcal{F}_2$ be the unit vector obtained by parallel translation of $X_1$ along $\tau([t_1,t_2])$. By Lemma~\ref{l:rectangle} we find that the geodesic $\exp_{x_2}(sX_2)$ is defined for all $s < \sqrt{l(t_1)^2-(t_2-t_1)^2}$. As $X_1 \in T_{x_1}\mathcal{F}_2$ was arbitrary and as the shortest geodesic joining $x_2$ to $M_\infty$ lies in $\mathcal{F}_2(x_2)$, it follows that $l(t_2) \ge \sqrt{l(t_1)^2-(t_2-t_1)^2}$, so that $l(t_1)^2-l(t_2)^2 \le (t_2-t_1)^2$. Interchanging the roles of $t_1, t_2$ we obtain $|l(t_1)^2 - l(t_2)^2| \le (t_1-t_2)^2$. Subdividing the segment $(t_1,t_2)$ into $n$ equal subsegments we get by the triangle inequality that $|l(t_1)^2 - l(t_2)^2| \le \frac{1}{n}(t_1-t_2)^2$, for any $n \in \mathbb{N}$. It follows that the restriction of $l$ to $I$ is constant, hence $l$ is constant on the whole $(a,b)$. Therefore the restriction of $d_\infty$ to every geodesic segment $\tau(a,b)$ lying on leaf of $\mathcal{F}_1(x)$ such that $\inf(d_\infty(\tau(t)) \, : \, t \in (a,b))>0$ is constant, hence by continuity of $d_\infty$ and connectedness of $\mathcal{F}_1(x)$, the restriction of $d_\infty$ to $\mathcal{F}_1(x)$ is constant.

Furthermore, the fact that $\mathcal{F}_1(x)$ is flat directly follows from Proposition~\ref{p:Ri=0}. The fact that it is complete follows from the fact that the exponential map on $\mathcal{F}_1(x)$ is defined on the ball of radius $d_\infty(x) > 0$ at every point of $\mathcal{F}_1(x)$. \qed

\begin{remark*}
The above proof of Proposition~\ref{l:complete} completes the proof of Theorem~\ref{thm:main}. Note that neither of Propositions~\ref{p:Ri=0} and~\ref{l:complete} relies on the analyticity assumption, and one may expect that Theorem~\ref{thm:main} still holds if we drop it. Assuming that $\tilde M$ is just smooth, one can establish the following two facts: first, the easy fact that ``the limit" of minimal geodesics is a minimal geodesic, and second, that if there are more than one minimal geodesic starting at a given point, and one of them is tangent to a leaf of $\mathcal{F}_i$, then all the others also do. It then follows from Proposition~\ref{l:complete} that the closures of the points at which the minimal geodesics lie on $\mathcal{F}_1$ and on $\mathcal{F}_2$ are disjoint. So to prove the theorem in the smooth case, one needs to show that the following is impossible: we have two disjoint closed sets made of complete flat leaves of $\mathcal{F}_1$ and of complete flat leaves of $\mathcal{F}_2$, respectively, and an open flat domain in between.
\end{remark*}

\end{document}